\title
	[On Two-Phase Flows with Soluble Surfactant]
	{On Two-Phase Flows with Soluble Surfactant}
\author
	[Dieter Bothe]
	{Dieter Bothe}
\address
	{Center of Smart Interfaces and\newline\indent
	 Department of Mathematics\newline\indent
	 Technische Universit{\"a}t Darmstadt, \newline\indent
	 Petersenstr.~32, D-64287 Darmstadt, Germany}
\email
	{bothe@csi.tu-darmstadt.de}
\author
	[Matthias K{\"o}hne]
	{Matthias K{\"o}hne}
\address
	{Department of Mathematics\newline\indent
	 Technische Universit{\"a}t Darmstadt, \newline\indent
	 Schlossgartenstr.~7, D-64289 Darmstadt, Germany}
\email
	{koehne@csi.tu-darmstadt.de}
\author
	[Jan Pr{\"u}ss]
	{Jan Pr{\"u}ss}
\address
	{Institut f{\"u}r Mathematik \newline\indent
	 Martin-Luther-Universit{\"a}t Halle-Wittenberg, \newline\indent
	 Theodor-Lieser-Str.~5, D-60120 Halle, Germany}
\email
	{jan.pruess@mathematik.uni-halle.de}
\keywords
	{two-phase Navier-Stokes equations,
	 surface tension,
	 surfactant,
	 Marangoni effects,
	 local-in-time well-posedness,
	 maximal regularity,
	 initial boundary value problem,
	 free boundary problem}
\subjclass
	[2010]
	{Primary: 35Q30; Secondary: 35R35, 76D03, 76D45}
\date
	{\today}
\begin{document}
\begin{abstract}
	The presence of surfactants has a pronounced effect on the surface tension and, hence, on the stress balance at the
	phase separating interface of two-phase flows.
	The transport of momentum induced by the local variations of the capillary forces are known as {\itshape Marangoni effects}.
	Here we study a model, which assumes the surfactant to be soluble in one of the adjacent bulk phases
	and which represents a generalization of the two-phase Navier-Stokes equations.
	Based on maximal $L_p$-regularity results for suitable linearizations
	we obtain local well-posedness of this model.
	We employ recent results from the $L_p$-theory of two-phase flows without surfactant.
\end{abstract}
\renewcommand{\baselinestretch}{1.125}
\normalsize
\maketitle

\section*{Introduction}
We consider a free boundary problem, which describes the isothermal flow of two immiscible, incompressible Newtonian fluids.
To be precise, we assume the fluids to have constant densities $\rho_\pm > 0$ and constant viscosities $\eta_\pm > 0$.
They occupy the bounded domain $\Omega \subseteq \bR^n$, where the dispersed phase is located in $\Omega_-(t) \subseteq \Omega$
and separated from the continuous phase, which is located in $\Omega_+(t) \subseteq \Omega$, by a sharp interface $\Gamma(t)$.
The phase configuration is completely determined by the position of the interface, which, however, is time dependent and has to
be resolved as part of the problem.
The evolution of the velocity field $u$ and the pressure $p$ is governed by the Navier-Stokes equations
\begin{subequations}
	\eqnlabel{model}
\begin{equation}
	\eqnlabel{model:fluid}
	\begin{array}{rcll}
		\rho D^u_t u - \eta \Delta u + \nabla p & = & \rho f, & \qquad t > 0,\ x \in \Omega_\pm(t),   \\[0.5em]
		                          \mbox{div}\,u & = & 0,      & \qquad t > 0,\ x \in \Omega_\pm(t),   \\[0.5em]
		                  [u]_{\partial \Omega} & = & 0,      & \qquad t > 0,\ x \in \partial \Omega, \\[0.5em]
		                                   u(0) & = & u_0,    & \qquad x \in \Omega_\pm(0),
	\end{array}
\end{equation}
where we denote by $D^u_t = \partial_t + u \cdot \nabla$ the \emph{material derivative} w.\,r.\,t.\ $u$
and $[\,\cdot\,]_{\partial \Omega}$ denotes the trace of a quantity defined in $\Omega_+(t)$ on the boundary $\partial \Omega$.
As above it will be convenient to drop the phase subscripts and to write $\rho,\,\eta,\,\dots$ instead of $\rho_\pm,\,\eta_\pm,\,\dots$,
whenever there is no danger of confusion.

We assume, that no phase transitions and no interfacial slip occur, which implies the velocity field to be continuous across the interface.
In this case the normal velocity $V_\Gamma$ of the interface equals the normal component of the velocity field.
In summary, the initial phase configuration and the transmission conditions for mass and momentum
\begin{equation}
	\eqnlabel{model:interface}
	\begin{array}{rcll}
		                                                        [\![u]\!]_{\Gamma(t)} & = & 0,                                                    & \qquad t > 0,\ x \in \Gamma(t), \\[0.5em]
		-[\![\eta (\nabla u + \nabla u^{\sf{T}}) - p]\!]_{\Gamma(t)}\,\nu_{\Gamma(t)} & = & \mbox{div}_{\Gamma(t)}\,\{\,\sigma P_{\Gamma(t)}\,\}, & \qquad t > 0,\ x \in \Gamma(t), \\[0.5em]
		                                                                     V_\Gamma & = & [u]_{\Gamma(t)} \cdot \nu_{\Gamma(t)},                & \qquad t > 0,\ x \in \Gamma(t), \\[0.5em]
		                                                                    \Gamma(0) & = & \Gamma_0
	\end{array}
\end{equation}
completely determine the evolution of the interface.
Here, $\nu_{\Gamma(t)}$ denotes the normal field on $\Gamma(t)$ pointing from $\Omega_-(t)$ into $\Omega_+(t)$.
Moreover, $P_{\Gamma(t)} = 1 - \nu_{\Gamma(t)} \otimes \nu_{\Gamma(t)}$ denotes the projection onto the tangent space of $\Gamma(t)$
and $[\,\cdot\,]_{\Gamma(t)}$ denotes the trace on $\Gamma(t)$ of a quantity defined in $\Omega_\pm(t)$.
Finally, $[\![\,\cdot\,]\!]_{\Gamma(t)}$ denotes the jump of a quantity defined in $\Omega_\pm(t)$ across $\Gamma(t)$, i.\,e.
\begin{equation*}
	\begin{array}{l}
		[\![\phi]\!]_{\Gamma(t)}\,(t,\,x) = \\[0.5em]
			\quad {\displaystyle{\lim_{\epsilon \rightarrow 0+}}} \big\{\,\phi(t,\,x + \epsilon \nu_{\Gamma(t)}(t,\,x)) - \phi(t,\,x - \epsilon \nu_{\Gamma(t)}(t,\,x))\,\big\},
			\quad t > 0,\ x \in \Gamma(t).
	\end{array}
\end{equation*}

If a surface active agent -- \emph{surfactant} for short -- is present,
then the surface tension $\sigma > 0$ depends on the surface specific concentration of this surfactant
via a so-called equation of state
\begin{equation}
	\eqnlabel{model:surface-tension}
	\sigma = \sigma(c_\Gamma).
\end{equation}
Hence, we have
\begin{equation*}
	\mbox{div}_{\Gamma(t)}\,\{\,\sigma(c_\Gamma) P_{\Gamma(t)}\,\} = \sigma(c_\Gamma)\,\kappa_{\Gamma(t)}\,\nu_{\Gamma(t)} + \sigma^\prime(c_\Gamma) \nabla_{\Gamma(t)}\,c_\Gamma,
\end{equation*}
where $\kappa_{\Gamma(t)}$ denotes the sum of the principal curvatures of the interface.
Note that the surfacant is adsorbed at the interface $\Gamma(t)$ in this case and we denote by $c_\Gamma$ its surface specific concentration.

Moreover, we assume such a surfactant to be soluble in the continuous phase $\Omega_+(t)$ and denote by $c$ its volume specific concentration.
Based on Ficks law of diffusion the evolution of the surfactant is governed by
\begin{equation}
	\eqnlabel{model:surfactant}
	\begin{array}{rcll}
		                                                                       D^u_t c - d \Delta c & = & 0,                  & \qquad t > 0,\ x \in \Omega_+(t),     \\[0.5em]
		                                                                    \alpha([c]_{\Gamma(t)}) & = & c_\Gamma,           & \qquad t > 0,\ x \in \Gamma(t),       \\[0.5em]
		D^u_t c_\Gamma + c_\Gamma\,\mbox{div}_{\Gamma(t)}\,u - d_\Gamma \Delta_{\Gamma(t)} c_\Gamma & = & d \partial^+_\nu c, & \qquad t > 0,\ x \in \Gamma(t),       \\[0.5em]
		                                                                             \partial_\nu c & = & 0                   & \qquad t > 0,\ x \in \partial \Omega, \\[0.5em]
		                                                                                       c(0) & = & c_0,                & \qquad x \in \Omega_+(0),
	\end{array}
\end{equation}
where the normal derivatives have to be understood as $\partial^\pm_\nu = \mp [\nabla\,\cdot\,^{\sf{T}}]_{\Gamma(t)}\,\nu_{\Gamma(t)}$
resp.\ $\partial_\nu = [\nabla\,\cdot\,^{\sf{T}}]_{\partial \Omega}\,\nu_{\partial \Omega}$.
Note that $c_\Gamma$ is defined only on the graph $\mbox{gr}(\Gamma(t))$.
However, due to the kinematic condition $V_\Gamma = [u]_{\Gamma(t)} \cdot \nu_{\Gamma(t)}$ its material derivative is well-defined as
\begin{equation*}
	D^u_t c_\Gamma(t,\,x) := {\left. \frac{\mbox{d}}{\mbox{d}s} c_\Gamma(s,\,\chi^u(s\,|\,t,\,x)) \right|}_{s = t}, \qquad t > 0,\ x \in \Gamma(t),
\end{equation*}
where $\chi^u(\,\cdot\,|\,t,\,x)$ denotes the characteristic curve of a particle, which at time $t > 0$ is located at $x \in \Omega$
and which is advected by the velocity field $u$, i.\,e.
\begin{equation*}
	\dot{\chi}^u(s\,|\,t,\,x) = u(s,\,\chi^u(s\,|\,t,\,x)), \quad |s - t| < \epsilon, \qquad \chi^u(t\,|\,t,\,x) = x.
\end{equation*}
In fact, $x \in \Gamma(t)$ implies $\chi^u(s\,|\,t,\,x) \subseteq \Gamma(s)$, if $|s - t| < \epsilon$.
This is the reason why we prefer the notation employing the material derivative even for the momentum balance in \eqnref{model:fluid}
and the surfactant diffusion equation in the bulk phase in \eqnref{model:surfactant}.
\end{subequations}

\section{An Analytic Approach}
	\seclabel{approach}
The two-phase Navier-Stokes equations with surface tension, i.\,e.\ (\eqnref*{model:fluid},\,\eqnref*{model:interface}) with constant $\sigma > 0$,
have already been intensively studied.
For the most recent results concerning local-in-time well-posedness as well as the qualitative behaviour of solutions
we refer to \cite{Koehne-Pruess-Wilke:Two-Phase-Navier-Stokes, Shibata-Shimizu:Two-Phase-Navier-Stokes} and the references therein.
The two-phase Navier-Stokes equations with soluble surfactant \eqnref{model} have been studied in a prototype geometry,
where the interface is almost flat, see \cite{Bothe-Pruess-Simonett:Surfactant-Halfspace}.
The first aim of the present paper is to transfer this result to a general geometry, i.\,e.\ we will prove
\begin{theorem}
	\thmlabel{well-posedness}
	Let $\Omega \subseteq \bR^n$ be a bounded domain with boundary of class $C^{3-}$ and let $p > n + 2$.
	Let $\rho_\pm,\,\eta_\pm,\,d,\,d_\Gamma > 0$ and let $\sigma,\,\alpha \in C^{3-}(\bR_+,\,\bR_+)$ with $\alpha^\prime > 0$.
	Suppose
	\begin{equation*}
		u_0 \in W^{2 - 2/p}_p(\Omega \setminus \Gamma_0), \qquad \Gamma_0 \in W^{3 - 2/p}_p, \qquad c_0 \in W^{2 - 2/p}_p(\Omega_+(0),\,\bR_+)
	\end{equation*}
	are subject to the regularity and compatibility conditions
	\begin{equation*}
		\begin{array}{c}
			\mbox{\upshape div}\,u_0 = 0 \quad \mbox{\upshape in}\ \Omega \setminus \Gamma_0, \qquad [\![u_0]\!]_{\Gamma_0} = 0, \qquad [u_0]_{\partial \Omega} = 0, \\[0.5em]
			- P_{\Gamma_0} [\![\eta (\nabla u + \nabla u^{\sf{T}})]\!]_{\Gamma_0}\,\nu_{\Gamma_0} = \nabla_{\Gamma_0}\,(\sigma \circ \alpha \circ [c_0]_{\Gamma_0}), \\[0.5em]
			[c_0]_{\Gamma_0} \in W^{2 - 2/p}_p(\Gamma_0), \qquad \partial_\nu c_0 = 0 \quad \mbox{\upshape on}\ \partial \Omega
		\end{array}
	\end{equation*}
	and $f \in L_p(\bR_+,\,L_p(\Omega,\,\bR^n))$.
	Then there exists $a_0 = a_0(u_0,\,\Gamma_0,\,c_0) > 0$,
	such that the two-phase Navier-Stokes equations with soluble surfactant \eqnref{model} admit a unique local strong solution $(u,\,p,\,\Gamma,\,c,\,c_\Gamma)$
	with $c,\,c_\Gamma > 0$ in $(0,\,a_0)$.

	If in addition $\sigma,\,\alpha \in C^\omega(\bR_+,\,\bR_+)$ and $f = 0$, then the above solution is a classical solution.
	The graph
	\begin{equation*}
		\mbox{\upshape gr}\,\Gamma = \bigcup_{0 < t < a_0} \{\,t\,\} \times \Gamma(t)
	\end{equation*}
	is a real analytic manifold and with
	\begin{equation*}
		\mho   = \{\,(t,\,x) \in (0,\,a_0) \times \Omega\,:\,x \notin \Gamma(t)\,\}, \
		\mho_+ = \{\,(t,\,x) \in (0,\,a_0) \times \Omega\,:\,x \in \Omega_+(t)\,\}
	\end{equation*}
	the functions $(u,\,p): \mho \longrightarrow \bR^{n + 1}$ and $c: \mho_+ \longrightarrow \bR_+$ are real analytic.
\end{theorem}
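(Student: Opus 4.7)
The plan is to reduce the free boundary problem \eqnref{model} to a quasilinear evolution equation on the fixed reference domain $\Omega\setminus\Gamma_0$ via the Hanzawa transformation, and then to solve the latter by a contraction argument based on maximal $L_p$-regularity of its principal linearization. First I would represent the unknown interface as a normal graph
\begin{equation*}
	\Gamma(t) = \{\,p + h(t,\,p)\,\nu_{\Gamma_0}(p)\,:\,p \in \Gamma_0\,\}
\end{equation*}
with scalar height $h$ living in the natural parabolic trace space over $\Gamma_0$, and pull $u$, $p$ and $c$ back along the resulting Hanzawa diffeomorphism. The system \eqnref{model} becomes a quasilinear problem for $(u,\,p,\,h,\,c,\,c_\Gamma)$ on a fixed geometry, with all curvature, pull-back metric, convective, Marangoni and nonlinear adsorption contributions collected on the right-hand side.

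Next I would establish maximal $L_p$-regularity for the principal linear part, which decomposes into two subsystems. The two-phase Navier--Stokes/interface subsystem in $(u,\,p,\,h)$, driven by a surface-divergence forcing carrying the Marangoni term $\sigma^\prime([c_{\Gamma,0}])\,\nabla_{\Gamma_0} c_\Gamma$, has maximal regularity by \cite{Koehne-Pruess-Wilke:Two-Phase-Navier-Stokes}; the forcing is admissible since the trace of $c_\Gamma$ on $\Gamma_0$ has one extra spatial derivative more than what is required. The surfactant subsystem in $(c,\,c_\Gamma)$ couples a bulk parabolic equation in $\Omega_+(0)$ to a surface parabolic equation on $\Gamma_0$ through the transmission conditions $\alpha^\prime([c_0]_{\Gamma_0})\,[c]_{\Gamma_0} = c_\Gamma + g_1$ and $d\,\partial^+_\nu c = \partial_t c_\Gamma - d_\Gamma \Delta_{\Gamma_0} c_\Gamma + g_2$. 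Because $\alpha^\prime>0$ and $d,\,d_\Gamma>0$, this is a parabolic transmission problem of regular Lopatinskii--Shapiro type, and maximal regularity is obtained by localization, flattening of $\Gamma_0$, perturbation, and the half-space model problem of \cite{Bothe-Pruess-Simonett:Surfactant-Halfspace}.

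With linear maximal regularity on every $(0,\,a)$ in hand, the nonlinear problem is solved by a Banach fixed point argument in the associated maximal-regularity solution space. The key embedding, available precisely because $p > n+2$, is
\begin{equation*}
	W^1_p(0,\,a;\,L_p) \cap L_p(0,\,a;\,W^2_p) \hookrightarrow C([0,\,a];\,C^1),
\end{equation*}
which allows all genuinely nonlinear contributions -- the Navier--Stokes convection, the Hanzawa curvature nonlinearities, the Marangoni coupling and the chain-rule terms from $\sigma\circ\alpha$ -- to be estimated with constants tending to zero as $a \to 0+$. The compatibility conditions in the hypothesis are exactly what is needed so that each nonlinear right-hand side lies in the correct trace space at $t=0$; hence a small ball around an extension of the initial datum is contractively mapped into itself, producing a unique local strong solution on some $(0,\,a_0)$. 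Positivity of $c$ follows from the weak and strong parabolic maximum principles applied to \eqnref{model:surfactant}, and then $c_\Gamma = \alpha([c]_\Gamma) > 0$ because $\alpha$ is strictly positive.

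For the final claim, I would invoke the parameter trick of Masuda and Angenent: with $f=0$ and $\sigma,\,\alpha$ real-analytic, the pulled-back system depends analytically on a scaling/translation parameter $(\lambda,\,\mu) \in (1-\varepsilon,\,1+\varepsilon) \times B_\varepsilon(0) \subseteq \bR \times \bR^n$ acting via $u \mapsto u(\lambda \cdot,\,\cdot + \mu)$. Applying the analytic implicit function theorem to the contraction constructed above yields a solution real-analytic in $(\lambda,\mu)$, and differentiation in the parameters transfers into real-analyticity of $(u,\,p,\,c)$ on $\mho \cup \mho_+$ and of $\mbox{gr}\,\Gamma$ on $(0,\,a_0) \times \Omega$. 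The principal obstacle throughout is the interface coupling at top order: the bulk surfactant concentration drives the Marangoni stress at the interface, while the moving interface simultaneously transports $c_\Gamma$. Verifying that the combined linearization really decouples to leading order into the two subsystems listed above, thereby bringing the available $L_p$-maximal regularity results of \cite{Koehne-Pruess-Wilke:Two-Phase-Navier-Stokes} and \cite{Bothe-Pruess-Simonett:Surfactant-Halfspace} to bear, is the step where the analysis concentrates.
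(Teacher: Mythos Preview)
Your overall architecture---Hanzawa transformation, maximal $L_p$-regularity for the principal linearization, contraction mapping, then the parameter trick---matches the paper. Two points of divergence deserve comment.

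First, your decomposition of the linearization differs from the paper's. You keep the Neumann coupling $d\,\partial^+_\nu c$ on the linear side, producing a genuinely coupled bulk/surface transmission problem for $(c,c_\Gamma)$ that you resolve via localization and the half-space analysis of \cite{Bothe-Pruess-Simonett:Surfactant-Halfspace}. The paper instead pushes $d\,\partial^+_\nu c$ into the nonlinear remainder: at the linear level the surface equation for $c_\Sigma$ is a standalone parabolic problem on $\Sigma$, solved first; then $c$ is recovered from a bulk problem with the Dirichlet condition $\alpha'([c^\ast]_\Sigma)[c]_\Sigma = c_\Sigma + h^\alpha_c$ (this is where $\alpha'>0$ enters, via Lopatinskii--Shapiro); only afterwards is the two-phase Stokes block solved using \cite{Koehne-Pruess-Wilke:Two-Phase-Navier-Stokes}. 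The paper's triangular reduction is more elementary---no new model problem is needed---at the price of a slightly larger nonlinear remainder, while your route is structurally more faithful to the coupling. Both are valid.

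Second, and this is a genuine gap for the analyticity claim: you parametrize over $\Gamma_0$ itself, which is only of class $W^{3-2/p}_p$. The paper instead approximates $\Gamma_0$ by a \emph{real-analytic} reference hypersurface $\Sigma$ and parametrizes over $\Sigma$ with a small nonzero initial height $\gamma_0$. For local well-posedness your choice is harmless, since $p>n+2$ forces $\Gamma_0\in C^{2+\varepsilon}$, which suffices for the coefficient regularity needed in the maximal-regularity step. But the parameter trick for spatial analyticity requires the transformed system to depend real-analytically on the translation parameter $\mu$, and this in turn needs the geometric coefficients of the reference surface (normal, curvature tensor, projection) to be real-analytic. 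With $\Gamma_0$ as reference that fails. You must insert the approximation step $\Gamma_0\rightsquigarrow\Sigma$ before invoking the implicit function theorem.
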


The proof of \Thmref{well-posedness} will be carried out in \Secsref{well-posedness}{analyticity}.
However, besides the local well-posedness of model \eqnref{model} it is worthwhile to study its dynamics,
i.\,e.\ to characterize the equilibria and to study the qualitative behaviour of the solutions.
The state of the system is described in terms of the velocity, the position of the interface and the surfactant concentration in the bulk phase.
To determine the corresponding phase manifold, we denote by $\cM\cH^2(\Omega)$ the manifold of all compact $C^2$-hypersurfaces in $\Omega$,
which becomes a metric space when equipped with the Hausdorff metric
\begin{equation*}
	\mbox{dist}_{\cM\cH^2(\Omega)}(\Sigma,\,\Gamma) := \mbox{dist}(N^2 \Sigma,\,N^2 \Gamma), \qquad \Sigma,\,\Gamma \in \cM\cH^2(\Omega).
\end{equation*}
Here, $N^2 \Sigma$ denotes the second normal bundle of a hypersurface $\Sigma \in \cM\cH^2(\Omega)$.
As has been noted in \cite[Section 5]{Koehne-Pruess-Wilke:Two-Phase-Navier-Stokes},
every hypersurface $\Sigma \in \cM\cH^2(\Omega)$ may be described by a level-set function $\varphi_\Sigma \in C^2(\bar{\Omega})$, i.\,e.\ $\Sigma = \varphi^{-1}_\Sigma(0)$,
and, of course, $\Sigma$ is of class $W^s_p$, if and only if $\varphi_\Sigma \in W^s_p(\Omega)$.
With this notation, the phase manifold of the dynamical system \eqnref{model} is given by
\begin{equation}
	\eqnlabel{phase-manifold}
	\cS_p(\Omega) := \left\{\begin{array}{c} (v,\,\Sigma,\,\chi) \in C(\bar{\Omega})^n \times \cM\cH^2(\Omega) \times \dot{C}(\bar{\Omega}) \\[0.5em] \hline \\[-0.75em] v \in W^{2 - 2/p}_p(\Omega),\ \Sigma \in W^{3 - 2/p}_p, \\[0.5em] \mbox{div}\,v = 0\ \mbox{in}\ \Omega \setminus \Sigma,\ [\![v]\!]_\Sigma = 0,\ [v]_{\partial \Omega} = 0, \\[0.5em] \mbox{dom}\,\chi = \Omega \setminus \mbox{int}(\Sigma),\ \chi \in W^{2 - 2/p}_p(\Omega \setminus \mbox{int}(\Sigma)), \\[0.5em] [\chi]_\Sigma \in W^{2 - 2/p}_p(\Sigma),\ \partial_\nu \chi = 0\ \mbox{on}\ \partial \Omega \\[0.5em] - P_\Sigma [\![\eta (\nabla v + \nabla v^{\sf{T}})]\!]_\Sigma\,\nu_\Sigma = \sigma^\prime(\alpha([\chi]_\Sigma))\nabla_\Sigma\alpha([\chi]_\Sigma) \end{array}\right\},
\end{equation}
where we denote by
\begin{equation*}
	\dot{C}(\bar{\Omega}) := \left\{\,\phi \in C(\bar{U})\,:\,U \subseteq \bR^n\ \mbox{open},\ U \subseteq \Omega\,\right\}
\end{equation*}
the set of functions, which are defined and continuous on certain subsets of $\bar{\Omega}$,
and $\mbox{dom}\,\phi$ denotes the domain of definition of such a function $\phi$.
Moreover, for $\Sigma \in \cM\cH^2(\Omega)$ we denote by $\mbox{int}(\Sigma)$ the interior of $\Sigma$, i.\,e.\ the union of all connected components of $\Omega \setminus \Sigma$,
which are not in contact with $\partial \Omega$.
Since the solutions to \eqnref{model}, which exist thanks to \Thmref{well-posedness}, preserve the compatibility conditions of the phase manifold,
we may prove
\begin{theorem}
	\thmlabel{semiflow}
	Let $\Omega \subseteq \bR^n$ be a bounded domain with boundary of class $C^{3-}$ and let $p > n + 2$.
	Let $\rho_\pm,\,\eta_\pm,\,d,\,d_\Gamma > 0$ and let $\sigma,\,\alpha \in C^{3-}(\bR_+,\,\bR_+)$ with $\alpha^\prime > 0$.
	Then the strong solutions to \eqnref{model} generate a local semiflow in the phase manifold $\cS_p(\Omega)$.
	Each of these solutions exists on a maximal time interval $[0,\,a^\ast)$.
\end{theorem}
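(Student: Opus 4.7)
The strategy is to deduce from Theorem~\Thmref{well-posedness} the three defining ingredients of a local semiflow on $\cS_p(\Omega)$ -- invariance of the phase manifold under the evolution, the cocycle identity, and continuous dependence on initial data -- and then to obtain maximality of the existence interval by a standard continuation argument. To verify invariance, I would check that for every $t$ in the interval of existence the triple $(u(t),\Gamma(t),c(t))$ again belongs to $\cS_p(\Omega)$. The Sobolev regularities $v \in W^{2-2/p}_p$, $\Sigma \in W^{3-2/p}_p$, $\chi \in W^{2-2/p}_p$ entering \eqnref{phase-manifold} are precisely the time-trace spaces at a positive time slice of the maximal-$L_p$-regularity classes in which the strong solution lives; after transporting the solution back to the fixed reference configuration via a Hanzawa transform, the standard parabolic trace theorem furnishes them at every $t$. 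The algebraic compatibility relations -- incompressibility, no-slip on $\partial\Omega$, the velocity jump across $\Sigma$, the Neumann condition for $\chi$, and the tangential Marangoni stress balance -- either are built into the strong formulation of the PDE or reduce to the tangential projection of the interfacial momentum balance in \eqnref{model:interface} combined with the equation of state \eqnref{model:surface-tension}, so they persist on the entire existence interval.

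The cocycle identity $\Phi(t+s,z_0) = \Phi(t,\Phi(s,z_0))$ together with $\Phi(0,z_0)=z_0$ is then an immediate consequence of the uniqueness statement in Theorem~\Thmref{well-posedness}, applied at time $s$ to the datum $\Phi(s,z_0) \in \cS_p(\Omega)$. Continuous dependence of $\Phi$ in the topology of $\cS_p(\Omega)$ is the main technical point. My plan is to revisit the fixed-point / implicit function theorem argument that underlies Theorem~\Thmref{well-posedness}: its contraction estimates, which are uniform on a small neighbourhood of the reference datum in $W^{2-2/p}_p \times W^{3-2/p}_p \times W^{2-2/p}_p$, yield Lipschitz continuity of the solution in the maximal-regularity norm with respect to the data in the trace-space norms. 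The continuous embedding $W^{3-2/p}_p(\Omega) \hookrightarrow C^1(\bar\Omega)$, which is available since $p > n+2$, translates smallness of level-set perturbations into smallness of the Hausdorff distance between the corresponding hypersurfaces, while $W^{2-2/p}_p \hookrightarrow C(\bar\Omega)$ takes care of the velocity and surfactant components. The principal obstacle will be to choose the existence time uniformly on a $\cS_p(\Omega)$-neighbourhood of the reference datum, which amounts to controlling the Hanzawa parametrization and the constants in the underlying linear maximal-regularity estimates in a locally uniform way; this is possible because $\Sigma$ is allowed to vary only in a set of uniformly $W^{3-2/p}_p$-regular hypersurfaces.

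Finally, let $a^\ast = a^\ast(u_0,\Gamma_0,c_0)$ denote the supremum of those $a > 0$ for which a strong solution of \eqnref{model} exists on $[0,a)$; by uniqueness all such solutions agree on overlaps and glue to a strong solution on $[0,a^\ast)$. If $a^\ast < \infty$ and $(u(t),\Gamma(t),c(t))$ admitted a limit in $\cS_p(\Omega)$ as $t \uparrow a^\ast$, a further application of Theorem~\Thmref{well-posedness} with this limit as initial datum would extend the strong solution strictly beyond $a^\ast$, contradicting the definition of $a^\ast$. Hence $[0,a^\ast)$ is the maximal interval of existence, and the map $\Phi$ defined above is the desired local semiflow on $\cS_p(\Omega)$.
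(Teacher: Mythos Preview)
Your proposal is correct and follows essentially the same route as the paper: invoke Theorem~\Thmref{well-posedness} for local existence, use the maximal-regularity time trace to restart and continue the solution (possibly with a new reference manifold), and obtain the maximal interval $[0,a^\ast)$ by the standard contradiction argument. If anything, your outline is more thorough than the paper's own proof, which is quite terse and does not explicitly spell out the cocycle identity or continuous dependence; the paper also remarks that $a^\ast$ may be finite because the interface changes topology or touches $\partial\Omega$, a scenario implicitly subsumed under your non-existence of the limit in $\cS_p(\Omega)$.
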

The proof of \Thmref{semiflow} will be carried out in \Secref{semiflow}.
Note that neither the pressure nor the surfactant concentration on the interface explicitly appear in the definition of the phase manifold $\cS_p(\Omega)$.
In fact, the surfactant concentration on the interface is well-defined for every state $(v,\,\Sigma,\,\chi) \in \cS_p(\Omega)$ as $\chi_\Gamma = \alpha([\chi]_\Sigma)$.
Moreover, the pressure may be reconstructed for every semiflow
\begin{equation*}
	(u,\,\Gamma,\,c) \in BUC([0,\,a_0),\,\cS_p(\Omega)),
\end{equation*}
which is induced by a strong solution to \eqnref{model}, i.\,e.\ which in particular satisfies $\partial_t u,\,\Delta u \in L_p((0,\,a_0) \times \Omega)^n$.
Indeed, in this case we have
\begin{equation*}
	(\,u\,|\,\nabla \phi\,)_\Omega = -\,(\,\mbox{div}\,u\,|\,\phi\,)_\Omega - (\,[\![u]\!]_\Gamma \cdot \nu_\Gamma\,|\,\phi\,)_\Gamma = 0,
	\qquad \phi \in H^1_{p^\prime}(\Omega),
\end{equation*}
which implies $(\,\partial_t u\,|\,\nabla \phi\,)_\Omega = 0$ for all $\phi \in H^1_{p^\prime}(\Omega)$, where $1/p + 1/p^\prime = 1$.
Hence, the momentum balance in \eqnref{model:fluid} and the momentum transmission condition in \eqnref{model:interface} yield
\begin{equation*}
	\begin{array}{rcll}
		(\,\nabla \pi\,|\,\nabla \phi\,)_\Omega & = & (\,\frac{\mu}{\rho} \Delta u - u \cdot \nabla u\,|\,\nabla \phi\,)_\Omega,                                                             & \quad \phi \in H^1_{p^\prime}(\Omega), \\[0.5em]
		           [\![\rho \pi]\!]_{\Gamma(t)} & = & \sigma(c_\Gamma)\,\kappa_{\Gamma(t)} + [\![\eta (\nabla u + \nabla u^{\sf{T}})]\!]_{\Gamma(t)}\,\nu_{\Gamma(t)} \cdot \nu_{\Gamma(t)}, & \quad t > 0,\ x \in \Gamma(t)
	\end{array}
\end{equation*}
for the modified pressure $\pi = p / \rho$, where, of course, $c_\Gamma = \alpha([c]_\Gamma)$.
Thanks to \cite[Theorem~8.5]{Koehne-Pruess-Wilke:Two-Phase-Navier-Stokes}, this problem has a unique solution $\pi \in L_p((0,\,a_0) \times \Omega)$,
such that $\pi(t) \in \hat{W}^{1 - 1/p}_p(\Omega \setminus \Gamma(t))$ for $0 < t < a_0$.

As has been worked out in \cite{Bothe-Pruess:Surfactant-Stability}, the equilibria are closely related to the energy functional
\begin{subequations}
	\eqnlabel{energy}
\begin{equation}
	\eqnlabel{energy:lyapunov}
	\Phi(t\,|\,u,\,\Gamma,\,c) = \frac{1}{2} \|\sqrt{\rho} u(t)\|^2_{L_2(\Omega,\,\bR^n)} + \!\!\! \int_{\Omega_+(t)} \!\!\! \phi(c)\,\mbox{d}\cH^n + \int_{\Gamma(t)} \phi_\Gamma(\alpha([c]_{\Gamma(t)}))\,\mbox{d}\cH^{n - 1},
\end{equation}
which is composed of the kinetic energy and the free energies in the bulk phase and on the interface.
The latter are obtained via the energy densities $\phi$ and $\phi_\Gamma$, which may be obtained as
\begin{equation}
	\eqnlabel{energy:densities}
	\phi(s) = \int^s_0 \mu_\Gamma(\alpha(r))\,\mbox{d}r, \quad \phi_\Gamma(s) = \sigma(s) + s \mu_\Gamma(s), \quad - \mu_\Gamma(s) = \int^s_0 \sigma^\prime(r) / r\,\mbox{d}r
\end{equation}
\end{subequations}
based on the chemical potential $\mu_\Gamma$.
With these definitions \cite[Theorem~3.1]{Bothe-Pruess:Surfactant-Stability} states the following.
\begin{theorem}
	\thmlabel{equilibria}
	Let $\Omega \subseteq \bR^n$ be a bounded domain with boundary of class $C^{3-}$ and let $p > n + 2$.
	Let $\rho_\pm,\,\eta_\pm,\,d,\,d_\Gamma > 0$ and let $\sigma,\,\alpha \in C^{3-}(\bR_+,\,\bR_+)$ with $- \sigma^\prime,\,\alpha^\prime > 0$.
	Let the energy functional $\Phi$ and the energy densities $\phi,\,\phi_\Gamma$ be defined by \eqnref{energy}.
	Then we have
	\begin{enumerate}[1.]
		\item The energy equality
			\begin{equation*}
				\partial_t \Phi + 2 \eta \!\!\!\! \int_{\Omega \setminus \Gamma(t)} \!\!\!\! |D|^2\,\mbox{\upshape d}\cH^n
					+ d \!\!\! \int_{\Omega_+(t)} \!\!\! \phi^{\prime \prime}(c)\,|\nabla c|^2\,\mbox{\upshape d}\cH^n
					+ d_\Gamma \!\! \int_{\Gamma(t)} \!\! \phi^{\prime \prime}_\Gamma(c_\Gamma)\,|\nabla_\Gamma c_\Gamma|^2\,\mbox{\upshape d}\cH^{n - 1}
					= 0
			\end{equation*}
			is valid for smooth solutions to \eqnref{model}.
		\item The equilibria of \eqnref{model} are zero velocities, constant pressures in the connected components of the two phases,
			constant surfactant concentrations in $\Omega_+$ and on $\Gamma$ and the dispersed phase is a union of non-intersecting balls.
		\item The energy functional $\Phi$ is a strict Lyapunov functional.
		\item The critical points of the energy functional $\Phi$ for prescribed volumes of the connected components of the dispersed phase
			and prescribed total surfactant mass are precisely the equilibria of the system. \qed
	\end{enumerate}
\end{theorem}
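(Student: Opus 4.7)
The plan is to tackle the four assertions separately: (1)--(3) by formal energy computations on smooth solutions and (4) by Lagrange-multiplier variational calculus. For the energy equality (1), I test the momentum balance by $u$; the interior yields $-2\eta\int|D(u)|^2$ (thanks to $\mathrm{div}\,u=0$), while the stress-jump condition $-[\![S]\!]\nu_\Gamma=\mathrm{div}_\Gamma(\sigma P_\Gamma)$ turns the interface contribution into $\int_\Gamma u\cdot\mathrm{div}_\Gamma(\sigma(c_\Gamma)P_\Gamma)$. I then apply the Reynolds transport theorem on $\Omega_+(t)$ to $\int\phi(c)$, substitute $D_t^u c=d\Delta c$ and integrate by parts to get the bulk dissipation $-d\int\phi''|\nabla c|^2$ plus an interface flux $d\int_\Gamma\phi'([c]_\Gamma)\partial_\nu^+c$; the analogous surface calculation for $\int_\Gamma\phi_\Gamma(c_\Gamma)$, combined with the surface surfactant equation, gives the surface dissipation together with a Marangoni contribution $\int_\Gamma(\phi_\Gamma-c_\Gamma\phi_\Gamma')\mathrm{div}_\Gamma u$ and a flux term $d\int_\Gamma\phi_\Gamma'(c_\Gamma)\partial_\nu^+c$. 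The crucial algebraic identities come from differentiating the defining integral for $\mu_\Gamma$: one finds $\phi_\Gamma'(s)=\mu_\Gamma(s)$ and $\phi_\Gamma(s)-s\phi_\Gamma'(s)=\sigma(s)$, and the equation of state further implies $\phi'([c]_\Gamma)=\mu_\Gamma(\alpha([c]_\Gamma))=\mu_\Gamma(c_\Gamma)=\phi_\Gamma'(c_\Gamma)$, i.e.\ the chemical potentials match at the interface. These identities make the $\sigma\,\mathrm{div}_\Gamma u$ contributions from the kinetic and surface pieces cancel and combine the two flux terms into the claimed dissipative form.

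For (2) and (3), (1) shows that at a smooth stationary solution each of the three dissipation integrands must vanish; the hypothesis $-\sigma',\alpha'>0$ renders $\phi''(s)=-\sigma'(\alpha(s))\alpha'(s)/\alpha(s)$ and $\phi_\Gamma''(s)=-\sigma'(s)/s$ strictly positive, so $D(u)\equiv 0$, $\nabla c\equiv 0$, $\nabla_\Gamma c_\Gamma\equiv 0$. Because $D(u)=0$ forces $\nabla u$ to be skew on each bulk component, the no-slip condition yields $u\equiv 0$; then $c$ is constant in $\Omega_+$, $c_\Gamma$ is constant on each component of $\Gamma$, the pressure is piecewise constant, and the interface stress balance reduces to $[\![p]\!]=\sigma(c_\Gamma)\kappa$, which makes $\kappa$ constant on each component of $\Gamma$. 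Alexandrov's theorem identifies each such component as a sphere, giving (2); the same dissipation identity simultaneously shows $\Phi$ is strictly decreasing off the equilibrium set, proving (3).

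For (4), I would introduce Lagrange multipliers $\lambda_k$ for the volumes $|\Omega_-^{(k)}|$ of the connected components of the dispersed phase and $\mu$ for the total surfactant mass $\int_{\Omega_+}c+\int_\Gamma c_\Gamma$, and compute the first variation of $\Phi$ with respect to \emph{independent} perturbations $\delta c$ in $\Omega_+$, $\delta c_\Gamma$ on $\Gamma$ and a normal shift $h$ of $\Gamma$, using the shape-derivative identities $\delta\int_{\Omega_+}g=\int_{\Omega_+}\delta g-\int_\Gamma g\,h$ and $\delta\int_\Gamma f=\int_\Gamma\delta f+\int_\Gamma f\kappa h$. The Euler--Lagrange conditions read $\phi'(c)=\mu$ in $\Omega_+$, $\phi_\Gamma'(c_\Gamma)=\mu$ on $\Gamma$, and $[\phi_\Gamma(c_\Gamma)-\mu c_\Gamma]\kappa+\mu[c]_\Gamma-\phi([c]_\Gamma)=\lambda_k$ on $\Gamma_k$; the first two combined with $\phi'=\mu_\Gamma\circ\alpha$ and $\phi_\Gamma'=\mu_\Gamma$ give, via strict monotonicity of $\mu_\Gamma$, precisely the equation of state $c_\Gamma=\alpha([c]_\Gamma)$, while $\phi_\Gamma-\mu c_\Gamma$ collapses to $\sigma(c_\Gamma)$, so the third condition becomes $\sigma(c_\Gamma)\kappa=\mathrm{const}$ on each $\Gamma_k$; Alexandrov again produces spheres, matching the equilibria from (2). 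The main technical obstacle is precisely this variational book-keeping: the equation of state must emerge as a Lagrange-multiplier consequence (by keeping $\delta c$ and $\delta c_\Gamma$ independent) rather than being imposed a priori, and already in (1) one must remember that the non-obvious identity $\phi_\Gamma'=\mu_\Gamma$ is what makes the boundary flux and Marangoni terms combine cleanly.
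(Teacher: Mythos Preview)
The paper does not actually supply a proof of this theorem: it is quoted verbatim from \cite{Bothe-Pruess:Surfactant-Stability} (note the trailing \qed\ in the statement), so there is no ``paper's own proof'' to compare against. Your outline is precisely the standard argument that reference carries out, and it is correct in all essential points. The key algebraic observations you isolate --- $\phi_\Gamma'=\mu_\Gamma$, $\phi_\Gamma - c_\Gamma\phi_\Gamma'=\sigma$, and $\phi'\!\circ\!\mathrm{id}=\mu_\Gamma\!\circ\!\alpha$ so that $\phi'([c]_\Gamma)=\phi_\Gamma'(c_\Gamma)$ via the equation of state --- are exactly what makes the boundary bookkeeping close up in~(1), and your treatment of (2)--(4) (Korn/rigid-motion argument, strict convexity of $\phi,\phi_\Gamma$ from $-\sigma',\alpha'>0$, Alexandrov, and the Lagrange-multiplier computation with $\delta c$ and $\delta c_\Gamma$ kept independent so that the adsorption relation emerges rather than being imposed) matches the intended reasoning.

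One remark on the wording in your part~(1): the two interface flux terms $d\!\int_\Gamma \phi'([c]_\Gamma)\,\partial_\nu^+ c$ and $d\!\int_\Gamma \phi_\Gamma'(c_\Gamma)\,\partial_\nu^+ c$ do not ``combine into'' a dissipative term --- with the correct orientation they carry opposite signs and \emph{cancel} once $\phi'([c]_\Gamma)=\phi_\Gamma'(c_\Gamma)$, which is consistent with (and in fact equivalent to) conservation of total surfactant mass. Make sure your sign conventions for $\partial_\nu^+$ and for the surface Reynolds transport identity are fixed so that this cancellation is visible; otherwise the energy identity will appear to acquire a spurious boundary contribution.
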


Note that \eqnref{energy:densities} implies $\phi^{\prime \prime}(s) = -\sigma^\prime(\alpha(s)) \alpha^\prime(s) / \alpha(s)$ as well as $\phi^{\prime \prime}_\Gamma(s) = -\sigma^\prime(s) / s$ for $s > 0$,
i.\,e.\ $\phi$ and $\phi_\Gamma$ are convex, provided the surface tension $\sigma$ is monotonically decreasing
and the adsorption isotherm $\alpha$ is monotonically increasing,
which are natural assumptions.

\section{Linearization}
	\seclabel{linearization}
\subsection{The Hanzawa Transformation}
The proof of \Thmref{well-posedness} is based on maximal regularity results for suitable linearizations of \eqnref{model} and fixed point arguments.
To obtain a linearization in a fixed domain note that any hypersurface $\Sigma \in \cM\cH^2(\Omega)$ forms the boundary of the union of finitely many domains
contained in $\Omega$, i.\,e.\ $\mbox{int}(\Sigma) \subseteq \Omega$ is an open set with compact boundary of class $C^2$ and outer unit normal field $\nu_\Sigma$.
Moreover, every such hypersurface admits a tube contained in $\Omega$, i.\,e.\ there exists $\epsilon > 0$, such that the tube mapping
\begin{equation*}
	T^\epsilon_\Sigma: (-\epsilon,\,\epsilon) \times \Sigma \longrightarrow \Omega, \qquad T(r,\,x) = x + r \nu_\Sigma(x), \quad |r| < \epsilon,\ x \in \Sigma
\end{equation*}
is a $C^1$-diffeomorphism from $(-\epsilon,\,\epsilon) \times \Sigma$ onto $R(T^\epsilon_\Sigma) \subseteq \Omega$.
The supremum of all such $\epsilon > 0$ is called the tube size of $\Sigma$ and denoted by $\tau(\Sigma)$.
Note that the tube size is bounded from above by the radii of the interior and exterior ball condition of $\mbox{int}(\Sigma)$
as well as by the reciprocals of the moduli of the principal curvatures $\kappa_1,\,\dots,\,\kappa_{n - 1}$,
which are the eigenvalues of the curvature tensor $L_\Sigma = - \nabla_\Sigma\,\nu_\Sigma$.
In particular, we have
\begin{equation*}
	\tau(\Sigma) \leq \mbox{min}\,\big\{\,1 / |\kappa_i(x)|\,:\,x \in \Sigma,\ i = 1,\,\dots,\,n - 1\,\big\}.
\end{equation*}

Now, given any initial interface $\Gamma_0 \in W^{3 - 2/p}_p$ and any $\delta > 0$,
it is well-known that there exists a reference hypersurface $\Sigma \in \cM\cH^2(\Omega)$,
which may be choosen to be real analytic, such that
\begin{equation*}
	\mbox{dist}_{\cM\cH^2(\Omega)}(\Sigma,\,\Gamma_0) < \delta \quad \mbox{and} \quad \Gamma_0 \in R(T^{\epsilon/3}_\Sigma)
\end{equation*}
for some $0 < \epsilon < \tau(\Sigma)$, cf.~\cite[Section 2]{Koehne-Pruess-Wilke:Two-Phase-Navier-Stokes}.
Hence, $\Gamma_0$ may be parametrized over $\Sigma$ via a function $\gamma_0 \in W^{3 - 2/p}_p(\Sigma)$ as
\begin{equation*}
	\Gamma_0 = \big\{\,x + \gamma_0(x) \nu_\Sigma(x)\,:\,x \in \Sigma\,\big\}
\end{equation*}
and we expect the interface $\Gamma(t)$ to stay in the tube $R(T^{\epsilon/3}_\Sigma)$ and to be parametrized by a function
$\gamma(t,\,\cdot\,) \in W^{3 - 2/p}_p(\Sigma)$ as well, at least for small times $0 < t < a$.
Thus, by choosing a cut-off function $\chi \in C^\infty_0(\bR,\,[0,\,1])$ with
\begin{equation*}
	\chi(r) = 1, \quad |r| < 1 / 3, \qquad \chi(r) = 0, \quad |r| > 2 / 3, \qquad |\chi^\prime|_\infty < 4
\end{equation*}
we obtain a family of $C^1$-diffeomorphisms, the {\itshape Hanzawa transformation},
\begin{equation*}
	\begin{array}{c}
		\Theta: [0,\,a) \times \Omega \longrightarrow \Omega, \\[0.5em]
		\Theta(t,\,x) = x + \chi(d_\Sigma(x) / \epsilon)\,\gamma(t,\,\Pi_\Sigma(x))\,\nu_\Sigma(\Pi_\Sigma(x)), \quad 0 \leq t < a,\ x \in \Omega,
	\end{array}
\end{equation*}
which are of class $W^{3 - 2/p}_p$, if $\gamma \in BUC([0,\,a),\,W^{3 - 2/p}_p(\Sigma))$ and $\Sigma$ is choosen to be sufficiently smooth,
and which transform the system \eqnref{model} to a fixed domain, since
\begin{equation*}
	\Omega_-(t) = \mbox{int}(\Gamma(t)) = \Theta(t,\,\mbox{int}(\Sigma)), \quad \Omega_+(t) = \Omega \setminus \overline{\mbox{int}}(\Gamma(t)) = \Theta(t,\,\Omega \setminus \overline{\mbox{int}}(\Sigma))
\end{equation*}
and, of course, $\Gamma(t) = \Theta(t,\,\Sigma)$ for all $0 \leq t < a$.
Note, that we denote by
\begin{equation*}
	T^{-\epsilon}_\Sigma = d_\Sigma \times \Pi_\Sigma: R(T^\epsilon_\Sigma) \longrightarrow (-\epsilon,\,\epsilon) \times \Sigma
\end{equation*}
the inverse of $T^\epsilon_\Sigma$, i.\,e.\ $d_\Sigma: R(T^\epsilon_\Sigma) \longrightarrow (-\epsilon,\,\epsilon)$ is nothing but the signed distance from $\Sigma$
and $\Pi_\Sigma: R(T^\epsilon_\Sigma) \longrightarrow \Sigma$ is the metric projection onto $\Sigma$.
Also note, that the transformation $\Theta(t,\,\cdot\,)$ is well-defined for all $x \in \Omega$, even if $d_\Sigma$ and $\Pi_\Sigma$ are only defined
in $R(T^\epsilon_\Sigma)$, since the cut-off function $\chi(\,\cdot\,/ \epsilon)$ vanishes outside $(-2 \epsilon /3,\,2 \epsilon / 3)$ and $\Theta(t,\,\cdot\,)$ is to be understood
as the identity outside $R(T^{2 \epsilon /3}_\Sigma)$.

For the following computations it will be convenient, to have an alternative representation of the Hanzawa transformation at hand.
With $0 \leq t < a$ and $x \in \Sigma$ fixed we define
\begin{subequations}
	\eqnlabel{hanzawa-difference}
\begin{equation*}
	\theta(\,\cdot\,;\,t,\,x): (-\epsilon,\,\epsilon) \longrightarrow (-\epsilon,\,\epsilon),
	\qquad \theta(r\,;\,t,\,x) := r + \chi(r / \epsilon)\,\gamma(t,\,x), \quad |r| < \epsilon
\end{equation*}
and note that
\begin{equation}
	\eqnlabel{hanzawa-difference-forward}
	\Theta(t,\,x) = x + \{\,\theta(d_\Sigma(x)\,;\,t,\,\Pi_\Sigma(x)) - d_\Sigma(x)\,\}\,\nu_\Sigma(\Pi_\Sigma(x)), \quad 0 \leq t < a,\ x \in \Omega.
\end{equation}
Thus, for fixed $0 \leq t < a$ and $x \in \Sigma$, the mapping $\theta(\,\cdot\,;\,t,\,x): (-\epsilon,\,\epsilon) \longrightarrow (-\epsilon,\,\epsilon)$
is a bijection and we denote its inverse by $\bar{\theta}(\,\cdot\,;\,t,\,x): (-\epsilon,\,\epsilon) \longrightarrow (-\epsilon,\,\epsilon)$.
Note that $\theta(r\,;\,t,\,x) = \bar{\theta}(r\,;\,t,\,x) = r$, if $|r| > 2 \epsilon / 3$ and, therefore,
we may extend $\theta(\,\cdot\,;\,t,\,x)$ and $\bar{\theta}(\,\cdot\,;\,t,\,x)$ as the identity to $\bR \setminus (-\epsilon,\,\epsilon)$
and obtain a pair of inverse bijections.
Since $\Sigma$ is compact, the signed distance extends to a functional $d_\Sigma: \Omega \longrightarrow \bR$ via
\begin{equation*}
	d_\Sigma(x) := \mp \mbox{dist}_\Sigma(x), \qquad x \in \mbox{int}(\Sigma) \cup \Sigma \quad \mbox{resp.} \quad x \in \mbox{ext}(\Sigma) \cup \Sigma,
\end{equation*}
where we denote by $\mbox{ext}(\Sigma) := \Omega \setminus \overline{\mbox{int}}(\Sigma)$ the continuous phase in the transformed setting.
Thus, the spatial inverse $\bar{\Theta}(t,\,\cdot\,): \Omega \longrightarrow \Omega$ of $\Theta(t,\,\cdot\,): \Omega \longrightarrow \Omega$ is given as
\begin{equation}
	\eqnlabel{hanzawa-difference-backward}
	\bar{\Theta}(t,\,x) = x + \{\,\bar{\theta}(d_\Sigma(x)\,;\,t,\,\Pi_\Sigma(x)) - d_\Sigma(x)\,\}\,\nu_\Sigma(\Pi_\Sigma(x)), \quad 0 \leq t < a,\ x \in \Omega.
\end{equation}
\end{subequations}
Note that in both representations \eqnref{hanzawa-difference} the projection onto $\Sigma$ may be interpreted as the multi-valued
metric projection $\Pi_\Sigma: \Omega \longrightarrow 2^\Sigma \setminus \{\,\varnothing\,\}$, since
\begin{equation*}
	\theta(d_\Sigma(x)\,;\,t,\,y) - d_\Sigma(x) = \bar{\theta}(d_\Sigma(x)\,;\,t,\,y) - d_\Sigma(x) = 0,
	\quad \begin{array}{c} 0 \leq t < 0,\ y \in \Sigma, \\[0.5em] x \in \Omega,\ |d_\Sigma(x)| > 2 \epsilon / 3 \end{array}
\end{equation*}
and $\Pi_\Sigma: R(T^\epsilon_\Sigma) \longrightarrow \Sigma$ is well-defined as a single-valued metric projection.

Finally note that the function $\theta(\,\cdot\,;\,t,\,x): \bR \longrightarrow \bR$ is monotonically increasing for all $0 \leq t < a$ and $x \in \Sigma$,
provided that $\|\gamma(t,\,\cdot\,)\|_{BC(\Sigma)} < \epsilon / 4$.
This shows that $\Theta(t,\,\cdot\,): \Omega \longrightarrow \Omega$ is indeed invertible and we have
\begin{equation*}
	\bar{\theta}(r\,|\,t,\,x) = r - \bar{\chi}(r / \epsilon\,;\,t,\,x)\,\gamma(t,\,x), \qquad r \in \bR,\ 0 \leq t < a,\ x \in \Sigma
\end{equation*}
for some function $\bar{\chi}(\,\cdot\,;\,t,\,x): \bR \longrightarrow \bR$ with $\bar{\chi}(r\,;\,t,\,x) = 0$, if $|r| > 2/3$.
Moreover, we have
\begin{equation*}
	\begin{array}{rcrcl}
		\nabla \Theta(t,\,x) & = & 1
			& + & \frac{1}{\epsilon} \chi^\prime(d_\Sigma(x) / \epsilon) \gamma(t,\,\Pi_\Sigma(x)) (1 - P_\Sigma(\Pi_\Sigma(x)))                 \\[0.5em]
			& & & + & \chi(d_\Sigma(x) / \epsilon) \sfN(d_\Sigma(x)) (\nabla_\Sigma \gamma)(t,\,\Pi_\Sigma(x)) \otimes \nu_\Sigma(\Pi_\Sigma(x)) \\[0.5em]
			& & & - & \chi(d_\Sigma(x) / \epsilon) \gamma(t,\,\Pi_\Sigma(x)) \sfN(d_\Sigma(x)) L_\Sigma(\Pi_\Sigma(x))
	\end{array}
\end{equation*}
for $0 \leq t < a,\ x \in \Omega$, which shows that $\nabla \Theta(t,\,\cdot\,)$ is boundedly invertible and $\Theta(t,\,\cdot\,)$ is indeed a $C^1$-diffeomorphism,
provided $\|\gamma(t,\,\cdot\,)\|_{BC(\Sigma)}$ and $\|\nabla_\Sigma\,\gamma(t,\,\cdot\,)\|_{BC(\Sigma)}$ are sufficiently small.
Here $\sfN(\,\cdot\,) = (1 - \cdot\,L_\Sigma)^{-1}$ is defined by \eqnref{normal-transformation}.

\subsection{The Transformed Equations}
For the transformed quantities
\begin{equation*}
	\begin{array}{c}
		\bar{u}(t,\,\cdot\,) := u(t,\,\cdot\,) \circ \Theta(t,\,\cdot\,), \qquad \bar{p}(t,\,\cdot\,) := p(t,\,\cdot\,) \circ \Theta(t,\,\cdot\,) \\[0.5em]
		\bar{c}(t,\,\cdot\,) := c(t,\,\cdot\,) \circ [\Theta(t,\,\cdot\,)]_{\textrm{ext}(\Sigma)}
	\end{array}
\end{equation*}
we obtain transformed differential operators via
\begin{equation*}
	\begin{array}{rcrl}
		\cM^{\bar{u}}(\gamma)\,\bar{\phi}(t,\,x) & := & \{\,D^u_t \phi\,\}\,(t,\,\Theta(t,\,x)),  & \qquad 0 < t < a,\ x \in \Omega \setminus \Sigma, \\[0.5em]
		          \cG(\gamma)\,\bar{\phi}(t,\,x) & := & \{\,\nabla \phi\,\}\,(t,\,\Theta(t,\,x)), & \qquad 0 < t < a,\ x \in \Omega \setminus \Sigma, \\[0.5em]
		          \cL(\gamma)\,\bar{\phi}(t,\,x) & := & \{\,\Delta \phi\,\}\,(t,\,\Theta(t,\,x)), & \qquad 0 < t < a,\ x \in \Omega \setminus \Sigma,
	\end{array}
\end{equation*}
where $\bar{\phi}(t,\,\cdot\,) = \phi(t,\,\cdot\,) \circ \Theta(t,\,\cdot\,): \Omega \setminus \Sigma \longrightarrow \bR$
denotes the transform of a generic scalar valued function $\phi$, as well as
\begin{equation*}
	\cD(\gamma)\,\bar{\Phi}(t,\,x) := \{\,\mbox{div}\,\Phi\,\}\,(t,\,\Theta(t,\,x)), \qquad 0 < t < a,\ x \in \Omega \setminus \Sigma,
\end{equation*}
where $\bar{\Phi}(t,\,\cdot\,) = \Phi(t,\,\cdot\,) \circ \Theta(t,\,\cdot\,): \Omega \setminus \Sigma \longrightarrow \bR^n$
denotes the transform of a generic vector field $\Phi$.
Moreover, we set
\begin{equation*}
	\begin{array}{rcrl}
		\cM^{\bar{u}}(\gamma)\,\bar{\phi}_\Sigma(t,\,x) & := & \{\,D^u_t \phi_\Gamma\,\}\,(t,\,\Theta(t,\,x)),              & \qquad 0 < t < a,\ x \in \Sigma, \\[0.5em]
		   \cG_\Gamma(\gamma)\,\bar{\phi}_\Sigma(t,\,x) & := & \{\,\nabla_{\Gamma(t)} \phi_\Gamma\,\}\,(t,\,\Theta(t,\,x)), & \qquad 0 < t < a,\ x \in \Sigma, \\[0.5em]
		   \cL_\Gamma(\gamma)\,\bar{\phi}_\Sigma(t,\,x) & := & \{\,\Delta_{\Gamma(t)} \phi_\Gamma\,\}\,(t,\,\Theta(t,\,x)), & \qquad 0 < t < a,\ x \in \Sigma,
	\end{array}
\end{equation*}
where $\bar{\phi}_\Sigma(t,\,\cdot\,) = \phi_\Gamma(t,\,\cdot\,) \circ \Theta(t,\,\cdot\,): \Sigma \longrightarrow \bR$
denotes the transform of a generic scalar valued function $\phi_\Gamma$, and, analogously,
\begin{equation*}
	\cD_\Gamma(\gamma)\,\bar{\Phi}_\Sigma(t,\,x) := \{\,\mbox{div}_{\Gamma(t)}\,\Phi_\Gamma\,\}\,(t,\,\Theta(t,\,x)), \qquad 0 < t < a,\ x \in \Sigma,
\end{equation*}
where $\bar{\Phi}_\Sigma(t,\,\cdot\,) = \Phi_\Gamma(t,\,\cdot\,) \circ \Theta(t,\,\cdot\,): \Sigma \longrightarrow \bR^n$
denotes the transform of a generic vector field $\Phi_\Gamma$.
Finally, we transform the geometric quantities
\begin{equation*}
	\begin{array}{rcll}
		   \bar{\nu}_\Gamma(\gamma)\,(t,\,x) & := & \nu_{\Gamma(t)}(t,\,\Theta(t,\,x)),    & \qquad 0 < t < a,\ x \in \Sigma, \\[0.5em]
		\bar{\kappa}_\Gamma(\gamma)\,(t,\,x) & := & \kappa_{\Gamma(t)}(t,\,\Theta(t,\,x)), & \qquad 0 < t < a,\ x \in \Sigma, \\[0.5em]
		     \bar{V}_\Gamma(\gamma)\,(t,\,x) & := & V_\Gamma(t,\,\Theta(t,\,x)),           & \qquad 0 < t < a,\ x \in \Sigma
	\end{array}
\end{equation*}
and arrive at
\begin{equation}
	\eqnlabel{model:fixed}
	\begin{array}{rclll}
		                                                \rho \cM^{\bar{u}}(\gamma) \bar{u} - \eta \cL(\gamma) \bar{u} + \cG(\gamma) \bar{p} & = & \rho \bar{f}                                                       & \ \mbox{in} & (0,\,a) \times \Omega \setminus \Sigma, \\[0.5em]
		                                                                                                                \cD(\gamma) \bar{u} & = & 0                                                                  & \ \mbox{in} & (0,\,a) \times \Omega \setminus \Sigma, \\[0.5em]
		                                                                              \cM^{\bar{u}}(\gamma) \bar{c} - d \cL(\gamma) \bar{c} & = & 0                                                                  & \ \mbox{in} & (0,\,a) \times \mbox{ext}(\Sigma),      \\[0.5em]
		                                                                           [\![\bar{u}]\!]_\Sigma = 0, \quad \bar{V}_\Gamma(\gamma) & = & \bar{u} \cdot \nu_\Gamma(\gamma)                                   & \ \mbox{on} & (0,\,a) \times \Sigma,                  \\[0.5em]
		                           - [\![\eta( \cG(\gamma) \bar{u} + \cG(\gamma) \bar{u}^{\sf{T}}) - \bar{p}]\!]_\Sigma\,\nu_\Gamma(\gamma) \\[0.5em]
		                                                               - \cD_\Gamma(\gamma) \{\,\sigma(\bar{c}_\Sigma) P_\Gamma(\gamma)\,\} & = & 0                                                                  & \ \mbox{on} & (0,\,a) \times \Sigma,                  \\[0.5em]
		                                                                                                           \alpha([\bar{c}]_\Sigma) & = & \bar{c}_\Sigma                                                     & \ \mbox{on} & (0,\,a) \times \Sigma,                  \\[0.5em]
		\cM^{\bar{u}}(\gamma) \bar{c}_\Sigma + \bar{c}_\Sigma \cD_\Gamma(\gamma) \bar{u} - d_\Gamma \cL_\Gamma(\gamma) \bar{c}_\Sigma & = & d \partial^e_\nu \bar{c}                                                 & \ \mbox{on} & (0,\,a) \times \Sigma,                  \\[0.5em]
		                                                                        [\bar{u}]_{\partial \Omega} = 0, \quad \partial_\nu \bar{c} & = & 0                                                                  & \ \mbox{on} & (0,\,a) \times \partial \Omega,         \\[0.5em]
		                         \bar{u}(0) = \bar{u}_0\ \mbox{in}\ \Omega, \quad \gamma(0) = \gamma_0\ \mbox{on}\ \Sigma, \quad \bar{c}(0) & = & \bar{c}_0                                                          & \ \mbox{in} & \mbox{ext}(\Sigma),
	\end{array}
\end{equation}
where $\partial^e_\nu$ on denotes the normal derivative on $\Sigma = \partial \mbox{ext}(\Sigma) \cap \Sigma$,
which should be interpreted as part of the boundary of $\mbox{ext}(\Sigma)$ in this case, i.\,e.~the normal points from $\mbox{ext}(\Sigma)$ into $\mbox{int}(\Sigma)$.
Furthermore, the operator $P_\Gamma(\gamma) = 1 - \nu_\Gamma(\gamma) \otimes \nu_\Gamma(\gamma)$ denotes the transformed projection $P_{\Gamma(\cdot)}$ and
\begin{equation*}
	\bar{c}_\Sigma(t,\,\cdot\,) := c_\Gamma(t,\,\cdot\,) \circ \Theta(t,\,\cdot\,)
\end{equation*}
denotes the transformed surface specific surfactant concentration, being a function $\bar{c}_\Sigma: (0,\,a) \times \Sigma \longrightarrow \bR_+$,
which explains the change in the subscript.

Note that problem \eqnref{model:fixed} may be interpreted as an evolution equation for the transformed unkowns $\bar{u}$ and $\bar{c}$ in the fixed domain
$\Omega \setminus \Sigma$ resp.\ $\mbox{ext}(\Sigma)$.
The pressure $\bar{p}$ plays the role of a Lagrangian parameter introduced due to the divergence constraint.
Moreover, the velocity $\bar{u}$ is subject to a dynamic transmission condition across $\Sigma$,
which introduces the parametrization $\gamma$ and the corresponding evolution equation on $\Sigma$.
Finally, the surfactant concentration $\bar{c}$ is subject to a dynamic boundary condition on $\Sigma$,
which introduces the surface concentration $\bar{c}_\Sigma$ of adsorbed surfactant and the corresponding evolution equation on $\Sigma$.
Indeed, we have
\begin{equation}
	\eqnlabel{normal-transformation}
	\begin{array}{c}
		\nu_\Gamma(\gamma) = \mu(\gamma) (\nu_\Sigma - \sfN(\gamma) \nabla_\Sigma \gamma), \\[0.5em]
		\mu(\gamma) = (1 + |\sfN(\gamma) \nabla_\Sigma \gamma|^2)^{-1/2}, \qquad \sfN(\gamma) = (1 - \gamma L_\Sigma)^{-1},
	\end{array}
\end{equation}
where $L_\Sigma := - \nabla_\Sigma \nu_\Sigma$ denotes the curvature tensor of $\Sigma$, and the kinematic condition reads
\begin{equation}
	\eqnlabel{transformed-kinematic}
	\begin{array}{rcl}
		\partial_t \gamma
			& = & \mu(\gamma)^{-1} \partial_t \gamma\,(\nu_\Sigma \cdot \nu_\Gamma(\gamma)) = \mu(\gamma)^{-1} (\partial_t \Theta \cdot \nu_\Gamma(\gamma)) = \mu(\gamma)^{-1} \bar{u}_\Gamma(\gamma) \\[0.5em]
			& = & \mu(\gamma)^{-1}\,(\bar{u} \cdot \nu_\Gamma(\gamma)) = \bar{u} \cdot \nu_\Sigma - \bar{u} \cdot \sfN(\gamma) \nabla_\Sigma \gamma \qquad \mbox{on}\ (0,\,a) \times \Sigma,
	\end{array}
\end{equation}
which is the hidden evolution equation for $\gamma$.

To keep this section short, we refrain from giving all details of the involved differential geometric computations here.
For a coincise introduction of this topic we refer to \cite{Kimura:Moving-Hypersurfaces},
see also \cite[Section 2]{Koehne-Pruess-Wilke:Two-Phase-Navier-Stokes}.

\subsection{The Linearized Operators}
Our analysis of problem \eqnref{model:fixed} relies on maximal $L_p$-regularity results for suitable linearizations.
To obtain such a linearization w.\,r.\,t.\ reference functions
\begin{equation*}
	\begin{array}{rcrcl}
		       u^\ast & \in & \bX_u(a) & := & \left\{ \begin{array}{l} \Phi \in H^1_p((0,\,a),\,L_p(\Omega \setminus \Sigma)^n) \\[0.5em] \qquad \qquad \cap \ L_p((0,\,a),\,H^2_p(\Omega \setminus \Sigma)^n) \end{array} : \begin{array}{c} [\![\Phi]\!]_\Sigma = 0, \\[0.5em] [\Phi]_{\partial \Omega} = 0 \end{array} \right\}, \\[2.0em]
		       c^\ast & \in & \bX_c(a) & := & \left\{ \begin{array}{l} \phi \in H^1_p((0,\,a),\,L_p(\mbox{ext}(\Sigma))) \\[0.5em] \qquad \qquad \cap \ L_p((0,\,a),\,H^2_p(\mbox{ext}(\Sigma))) \end{array} : \begin{array}{c} [\phi]_\Sigma \in \bX_s(a), \\[0.5em] \partial_\nu \phi = 0 \end{array} \right\},                   \\[2.0em]
		c^\ast_\Sigma & \in & \bX_s(a) & := & H^1_p((0,\,a),\,L_p(\Sigma)) \cap L_p((0,\,a),\,H^2_p(\Sigma))
	\end{array}
\end{equation*}
first observe that
\begin{equation*}
	\begin{array}{c}
		\cM^{\bar{u}}(\gamma) = \partial_t + (\bar{u} - \partial_t \Theta) \cdot \cG(\gamma) = \partial_t + (u^\ast \cdot \nabla) - \sfM(\bar{u},\,\gamma\,|\,u^\ast) \cdot \nabla, \\[0.5em]
		\cG(\gamma) = (1 - \sfG(\gamma)) \nabla, \quad \cD(\gamma) = (1 - \sfG(\gamma)) \nabla\,\cdot = \mbox{div} - \sfD(\gamma) : \nabla, \\[0.5em]
		\cL(\gamma) = \cD(\gamma) \cG(\gamma) = \Delta - \sfA(\gamma) \cdot \nabla - \sfL(\gamma) : \nabla^2,
	\end{array}
\end{equation*}
where the multiplication operators are given as
\begin{equation*}
	\begin{array}{c}
		\sfM(\bar{u},\,\gamma\,|\,u^\ast) = \partial_t \Theta - (\bar{u} - u^\ast) + \sfD(\gamma) (\bar{u} - \partial_t \Theta), \\[0.5em]
		\sfG(\gamma) = (\nabla \Theta)^{-1} (\nabla \Theta - 1), \quad \sfD(\gamma) = \sfG(\gamma)^{\sf{T}},   \\[0.5em]
		\sfA(\gamma) = - (\Delta \bar{\Theta}) \circ \Theta, \quad \sfL(\gamma) = \sfD(\gamma) - \sfD(\gamma) \sfG(\gamma) + \sfG(\gamma).
	\end{array}
\end{equation*}
Unfortunately, the transformation of the operators that act on functions defined on $\mbox{gr}(\Gamma(\,\cdot\,))$ is somehow more involved.
First note that a straight forward computation shows that
\begin{equation*}
	\cG_\Gamma(\gamma) = P_\Gamma(\gamma) \sfN(\gamma) \nabla_\Sigma =: (1 - \sfG_\Sigma(\gamma)) \nabla_\Sigma.
\end{equation*}
Indeed, we have
\begin{equation*}
	\begin{array}{rcl}
		P_\Gamma(\gamma)
			& = & 1 - \nu_\Gamma(\gamma) \otimes \nu_\Gamma(\gamma) = 1 - \mu(\gamma)^2 (\nu_\Sigma - \sfN(\gamma) \nabla_\Sigma \gamma) \otimes (\nu_\Sigma - \sfN(\gamma) \nabla_\Sigma \gamma) \\[0.5em]
			& = & 1 - (1 - \sfn(\gamma))\,(\nu_\Sigma - \sfN(\gamma) \nabla_\Sigma \gamma) \otimes (\nu_\Sigma - \sfN(\gamma) \nabla_\Sigma \gamma) =: P_\Sigma - \sfP(\gamma),
	\end{array}
\end{equation*}
where we have set
\begin{equation*}
	\sfn(\gamma) := \frac{|\sfN(\gamma) \nabla_\Sigma \gamma|^2}{1 + |\sfN(\gamma) \nabla_\Sigma \gamma|^2},
\end{equation*}
and, therefore,
\begin{equation*}
	\sfG_\Sigma(\gamma) = (1 - \sfN(\gamma)) + (1 - P_\Gamma(\gamma)) \sfN(\gamma).
\end{equation*}
To transform the material derivative for functions defined on $\mbox{gr}(\Gamma(\,\cdot\,))$ we assume
\begin{equation*}
	\phi_\Gamma(t,\,\cdot\,): \Gamma(t) \longrightarrow \bR, \qquad 0 < t < a
\end{equation*}
to be a generic family of scalar valued functions.
Setting
\begin{equation*}
	\bar{\phi}_\Sigma(t,\,\cdot\,) := \phi_\Gamma(t,\,\cdot\,) \circ \Theta(t,\,\cdot\,): \Sigma \longrightarrow \bR, \qquad 0 < t < a
\end{equation*}
we obtain for fixed $0 < t < a$ and $x \in \Sigma$ the relation
\begin{equation*}
	\begin{array}{l}
		\cM^{\bar{u}}(\gamma)\,\bar{\phi}_\Sigma(t,\,x) \\[1.0em]
			\ \quad = (D^u_t \phi_\Gamma)(t,\,\Theta(t,\,x)) \\[1.0em]
			\ \quad = {\left. {\displaystyle{\frac{\mbox{d}}{\mbox{d}s}}} \phi_\Gamma(s,\,\chi^u(s\,|\,t,\,\Theta(t,\,x))) \right|}_{s = t} \\[1.5em]
			\ \quad = {\left. {\displaystyle{\frac{\mbox{d}}{\mbox{d}s}}} \phi_\Gamma(s,\,\Theta(s,\,x)) \right|}_{s = t} + {\left. {\displaystyle{\frac{\mbox{d}}{\mbox{d}s}}} \Big( \phi_\Gamma(s,\,\chi^u(s\,|\,t,\,\Theta(t,\,x))) - \phi_\Gamma(s,\,\Theta(s,\,x)) \Big) \right|}_{s = t} \\[1.5em]
			\ \quad = \partial_t \bar{\phi}_\Sigma(t,\,x) + (\bar{u}_\Sigma \cdot \cG_\Gamma(\gamma))\,\bar{\phi}_\Sigma(t,\,x),
	\end{array}
\end{equation*}
where $\bar{u}_\Sigma = P_\Sigma \bar{u}$ denotes the tangential velocity on the reference interface $\Sigma$ and
$\chi^u(\,\cdot\,|\,t,\,x)$ denotes the characteristic curve of a particle, which is advected by the velocity field $u$,
as defined in the introduction, i.\,e.
\begin{equation*}
	\dot{\chi}^u(s\,|\,t,\,x) = u(s,\,\chi^u(s\,|\,t,\,x)), \quad |s - t| < \epsilon, \qquad \chi^u(t\,|\,t,\,x) = x.
\end{equation*}
Indeed, we have
\begin{equation*}
	\begin{array}{rll}
		\chi^u(s\,|\,t,\,\Theta(t,\,x)) - \Theta(t,\,x) & = & (s - t)\,u(s,\,\chi^u(s\,|\,t,\,\Theta(t,\,x))) + o(s - t) \\[0.75em] & =: & (s - t)\,h_1(s\,|\,t,\,x) \\[1.0em]
		\Theta(s,\,x) - \Theta(t,\,x) & = & (s - t)\,{\displaystyle{\frac{\gamma(s,\,x) - \gamma(t,\,x)}{s - t}}} \nu_\Sigma(x) \\[1.0em] & =: & (s - t)\,h_0(s\,|\,t,\,x)
	\end{array}
\end{equation*}
with
\begin{equation*}
	\begin{array}{rcl}
		{\displaystyle{\lim_{s \rightarrow t}}} \Big( h_1(s\,|\,t,\,x) - h_0(s\,|\,t,\,x) \Big)
			& = & \bar{u}(t,\,x) - \partial_t \gamma(t,\,x) \nu_\Sigma(x) \\[0.5em]
			& = & P_\Sigma(x) \bar{u}(t,\,x) = \bar{u}_\Sigma(t,\,x)
	\end{array}
\end{equation*}
due to the kinematic condition \eqnref{transformed-kinematic} and, hence,
\begin{equation*}
	\begin{array}{l}
		{\left. {\displaystyle{\frac{\mbox{d}}{\mbox{d}s}}} \Big( \phi_\Gamma(s,\,\chi^u(s\,|\,t,\,\Theta(t,\,x))) - \phi_\Gamma(s,\,\Theta(s,\,x)) \Big) \right|}_{s = t} \\[1.5em]
			\qquad = {\displaystyle{\lim_{s \rightarrow t}}} \frac{(\phi_\Gamma(s,\,\chi^u(s\,|\,t,\,\Theta(t,\,x))) - \phi_\Gamma(t,\,\Theta(t,\,x))) - (\phi_\Gamma(s,\,\Theta(s,\,x)) - \phi_\Gamma(t,\,\Theta(t,\,x))) }{s - t} \\[1.5em]
			\qquad = {\displaystyle{\lim_{s \rightarrow t}}} \frac{\phi_\Gamma(s,\,\Theta(t,\,x) + (s - t)\,h_1(s,\,t,\,x)) - \phi_\Gamma(s,\,\Theta(t,\,x) + (s - t)\,h_0(s,\,t,\,x))}{s - t} \\[1.5em]
			\qquad = {\displaystyle{\lim_{s \rightarrow t}}} \Big( h_1(s\,|\,t,\,x) - h_0(s\,|\,t,\,x) \Big) \cdot (\nabla_\Gamma \phi_\Gamma)(t,\,\Theta(t,\,x)) \\[1.0em]
			\qquad = (\bar{u}_\Sigma \cdot \cG_\Gamma(\gamma))\,\bar{\phi}_\Sigma(t,\,x)
	\end{array}
\end{equation*}
Hence, the transformed material derivative reads
\begin{equation*}
	\cM^{\bar{u}}(\gamma) = \partial_t + \bar{u}_\Sigma \cdot \cG_\Gamma(\gamma) = \partial_t + (u^\ast_\Sigma \cdot \nabla_\Sigma) - \sfM_\Sigma(\bar{u},\,\gamma\,|\,u^\ast) \cdot \nabla_\Sigma
\end{equation*}
with
\begin{equation*}
	\sfM_\Sigma(\bar{u},\,\gamma\,|\,u^\ast) = \sfD_\Sigma(\gamma) \bar{u}_\Sigma - (\bar{u}_\Sigma - u^\ast_\Sigma), \qquad \sfD_\Sigma(\gamma) = \sfG^\sfT_\Sigma(\gamma).
\end{equation*}
Finally, we need to linearize the curvature
\begin{equation*}
	\kappa_\Gamma(\gamma) = \mu(\gamma)\,\left( \begin{array}{l} \mbox{tr}(\sfN(\gamma)(\nabla_\Sigma \sfN(\gamma) \nabla_\Sigma \gamma) + L_\Sigma) \\[0.5em] \qquad \qquad - \ \mu(\gamma)^2 \sfN(\gamma)^2 \nabla_\Sigma \gamma \cdot (\nabla_\Sigma \sfN(\gamma) \nabla_\Sigma \gamma) \sfN(\gamma) \nabla_\Sigma \gamma \end{array} \right),
\end{equation*}
which becomes
\begin{equation*}
	\kappa^\prime(0) = \mbox{tr}\,L^2_\Sigma + \Delta_\Sigma,
\end{equation*}
cf.~\cite{Kimura:Moving-Hypersurfaces} and \cite[Section 2]{Koehne-Pruess-Wilke:Two-Phase-Navier-Stokes}.

\subsection{The Principal Linear Part}
Based on the above preparations the principal linearization of problem \eqnref{model:fixed} reads as follows.
Note that here we drop the bars that have been introduced in the previous subsections for convenience.
\begin{equation}
	\eqnlabel{model:linearized}
	\begin{array}{rclll}
		                                                                             \rho \partial_t u + (u^\ast \cdot \nabla) u - \eta \Delta u + \nabla p & = & \rho F_u(u,\,p,\,\gamma\,|\,u^\ast)                                  & \mbox{in} & (0,\,a) \times \Omega \setminus \Sigma, \\[0.5em]
		                                                                                                                                      \mbox{div}\,u & = & G(u)                                                                 & \mbox{in} & (0,\,a) \times \Omega \setminus \Sigma, \\[0.5em]
		                                                                                                \partial_t c + (u^\ast \cdot \nabla) c - d \Delta c & = & F_c(u,\,\gamma,\,c\,|\,u^\ast)                                       & \mbox{in} & (0,\,a) \times \mbox{ext}(\Sigma),      \\[0.5em]
	                                                                                                                                     [\![u]\!]_\Sigma & = & 0                                                                    & \mbox{on} & (0,\,a) \times \Sigma,                  \\[0.5em]
		                                                                          - P_\Sigma [\![\eta( \nabla u + \nabla u^{\sf{T}})]\!]_\Sigma\,\nu_\Sigma                                                                                                                                  \\[0.5em]
		                                                                                            - \ \sigma^\prime(c^\ast_\Sigma) \nabla_\Sigma c_\Sigma & = & H^\tau_u(u,\,[\![p]\!]_\Sigma,\,\gamma,\,c_\Sigma\,|\,c^\ast_\Sigma) & \mbox{on} & (0,\,a) \times \Sigma,                  \\[0.5em]
		                                                                  - [\![\eta( \nabla u + \nabla u^{\sf{T}})]\!]_\Sigma\,\nu_\Sigma \cdot \nu_\Sigma                                                                                                                                   \\[0.5em]
		                                                                                + \ [\![p]\!]_\Sigma - \sigma(c^\ast_\Sigma) \Delta_\Sigma c_\Sigma & = & H^\nu_u(u,\,\gamma,\,c_\Sigma\,|\,c^\ast_\Sigma)                     & \mbox{on} & (0,\,a) \times \Sigma,                  \\[0.5em]
		                                                                \partial_t \gamma - u \cdot \nu_\Sigma + (u^\ast_\Sigma \cdot \nabla_\Sigma) \gamma & = & H_\gamma(u,\,\gamma\,|\,u^\ast)                                      & \mbox{on} & (0,\,a) \times \Sigma,                  \\[0.5em]
		                                                                                               \alpha^\prime([c^\ast]_\Sigma) [c]_\Sigma - c_\Sigma & = & H^\alpha_c(c\,|\,c^\ast)                                             & \mbox{on} & (0,\,a) \times \Sigma,                  \\[0.5em]
		                                               \partial_t c_\Sigma + (u^\ast_\Sigma \cdot \nabla_\Sigma) c_\Sigma - d_\Gamma \Delta_\Sigma c_\Sigma & = & H^\epsilon_c(c,\,c_\Sigma\,|\,u^\ast)                                & \mbox{on} & (0,\,a) \times \Sigma,                  \\[0.5em]
		                                                                                                    [u]_{\partial \Omega} = 0, \quad \partial_\nu c & = & 0                                                                    & \mbox{on} & (0,\,a) \times \partial \Omega,         \\[0.5em]
		                                                                                                     u(0) = u_0\ \mbox{in}\ \Omega, \quad \gamma(0) & = & \gamma_0                                                             & \mbox{on} & \Sigma,                                 \\[0.5em]
		                                                                                                                                               c(0) & = & c_0                                                                  & \mbox{in} & \mbox{ext}(\Sigma),
	\end{array}
\end{equation}
where the non-linearities are given as
\begin{equation*}
	\begin{array}{rcl}
		                          \rho F_u(u,\,p,\,\gamma\,|\,u^\ast) & = & \{\,\rho \sfM(u,\,\gamma\,|\,u^\ast) \cdot \nabla - \eta \sfA(\gamma) \cdot \nabla - \eta \sfL(\gamma) : \nabla^2\,\}\,u \\[0.5em]
		                                                              &   & \ + \ \{\,\sfG(\gamma) \nabla\,\}\,p, \\[0.5em]
		                                                         G(u) & = & \{\,\sfD(\gamma) : \nabla\,\}\,u, \\[0.5em]
		                               F_c(u,\,\gamma,\,c\,|\,u^\ast) & = & \{\,\sfM(u,\,\gamma\,|\,u^\ast) \cdot \nabla + d \sfA(\gamma) \cdot \nabla + d \sfL(\gamma) : \nabla^2\,\}\,c \\[0.5em]
	\end{array}
\end{equation*}
as well as
\begin{equation*}
	\begin{array}{rcl}
		H^\tau_u(u,\,[\![p]\!]_\Sigma,\,\gamma,\,c_\Sigma\,|\,u^\ast) & = & P_\Sigma [\![\eta(\sfG(\gamma) \nabla u + \sfG(\gamma) \nabla u^{\sf{T}})]\!]_\Sigma\,\sfN(\gamma) \nabla_\Sigma \gamma \\[0.5em]
		                                                              &   & \ - \ P_\Sigma [\![\eta(\nabla u + \nabla u^{\sf{T}})]\!]_\Sigma\,\sfN(\gamma) \nabla_\Sigma \gamma \\[0.5em]
		                                                              &   & \ + \ [\![p]\!]_\Sigma\,\sfN(\gamma) \nabla_\Sigma \gamma - P_\Sigma [\![\eta(\sfG(\gamma) \nabla u + \sfG(\gamma) \nabla u^{\sf{T}})]\!]_\Sigma\,\nu_\Sigma \\[0.5em]
		                                                              &   & \ + \ \mu(\gamma)^{-1}\,(1 - \mu(\gamma)) \sigma^\prime(c^\ast_\Sigma) \nabla_\Sigma c_\Sigma \\[0.5em]
		                                                              &   & \ + \ \mu(\gamma)^{-1}\,(\sigma^\prime(c_\Sigma) - \sigma^\prime(c^\ast_\Sigma)) \nabla_\Sigma c_\Sigma \\[0.5em]
		                                                              &   & \ - \ \mu(\gamma)^{-1}\,\sigma^\prime(c_\Sigma) \sfG_\Sigma(\gamma) \nabla_\Sigma c_\Sigma \\[0.5em]
		                                                              &   & \ - \ \sigma(c_\Sigma) \kappa_\Gamma(\gamma) \sfN(\gamma) \nabla_\Sigma \gamma, \\[0.5em]
		                    H^\nu_u(u,\,\gamma,\,c_\Sigma\,|\,u^\ast) & = & [\![\eta(\sfG(\gamma) \nabla u + \sfG(\gamma) \nabla u^{\sf{T}})]\!]_\Sigma\,\sfN(\gamma) \nabla_\Sigma \gamma \cdot \nu_\Sigma \\[0.5em]
		                                                              &   & \ - \ [\![\eta(\nabla u + \nabla u^{\sf{T}})]\!]_\Sigma\,\sfN(\gamma) \nabla_\Sigma \gamma \cdot \nu_\Sigma \\[0.5em]
		                                                              &   & \ - \ [\![\eta(\sfG(\gamma) \nabla u + \sfG(\gamma) \nabla u^{\sf{T}})]\!]_\Sigma\,\nu_\Sigma \cdot \nu_\Sigma \\[0.5em]
		                                                              &   & \ + \ \mu(\gamma)^{-1}\,\sigma(c^\ast_\Sigma) (\kappa_\Gamma(\gamma) - \Delta_\Sigma \gamma) \nu_\Gamma(\gamma) \cdot \nu_\Sigma \\[0.5em]
		                                                              &   & \ - \ \mu(\gamma)^{-1}\,\sigma^\prime(c_\Sigma) \sfG_\Sigma(\gamma) \nabla_\Sigma c_\Sigma \cdot \nu_\Sigma \\[0.5em]
		                                                              &   & \ + \ \mu(\gamma)^{-1}\,(\sigma(c_\Sigma) - \sigma(c^\ast_\Sigma)) \kappa_\Gamma(\gamma) \nu_\Gamma(\gamma) \cdot \nu_\Sigma, \\[0.5em] 
		                              H_\gamma(u,\,\gamma\,|\,u^\ast) & = & \{\,(1 - \sfN(\gamma)) u_\Sigma \cdot \nabla_\Sigma - (u_\Sigma - u^\ast_\Sigma) \cdot \nabla_\Sigma\,\}\,\gamma, \\[0.5em]
		                                     H^\alpha_c(c\,|\,c^\ast) & = & \alpha^\prime([c^\ast]_\Sigma) [c]_\Sigma - \alpha([c]_\Sigma), \\[0.5em]
		                        H^\epsilon_c(c,\,c_\Sigma\,|\,u^\ast) & = & d \partial^e_\nu \bar{c} - c_\Sigma\,\cD_\Gamma(\gamma) u \\[0.5em]
		                                                              &   & \ + \ \{\,\sfM_\Sigma(u,\,\gamma\,|\,u^\ast) \cdot \nabla_\Sigma - d \sfA_\Sigma(\gamma) \cdot \nabla_\Sigma - d \sfL_\Sigma(\gamma)\,\}\,c_\Sigma.
	\end{array}
\end{equation*}
As mentioned above, our analysis of problem \eqnref{model:fixed} -- or equivalently problem \eqnref{model:linearized} -- relies
on maximal $L_p$-regularity results for suitable linearizations,
which here are given by the linear left-hand side of problem \eqnref{model:linearized}.
These shall be established now.
To be precise, we will prove the existence of a unique maximal regular solution
\begin{equation*}
	(u,\,p,\,\gamma,\,c,\,c_\Sigma) \in \bX(a) := \bX_u(a) \times \bX_p(a) \times \bX_\gamma(a) \times \bX_c(a) \times \bX_s(a)
\end{equation*}
with
\begin{equation*}
	\begin{array}{rcl}
		\bX_p(a)      & := & \left\{\,q \in L_p((0,\,a),\,\dot{H}^1_p(\Omega))\,:\,[\![q]\!]_\Sigma \in \bH^\nu_u(a)\,\right\}, \\[1.0em]
		\bX_\gamma(a) & := & W^{2 - 1/2p}_p((0,\,a),\,L_p(\Sigma)) \cap H^1_p((0,\,a),\,W^{2 - 1/p}_p(\Sigma))                  \\[0.5em]
		              &    & \qquad \qquad \cap \ L_p((0,\,a),\,W^{3 - 1/p}_p(\Sigma))
	\end{array}
\end{equation*}
and $\bX_u(a)$, $\bX_c(a)$ and $\bX_s(a)$ defined as above
to the linear problem
\begin{equation}
	\eqnlabel{model:linear}
	\begin{array}{rclll}
		                                                                           \rho \partial_t u + (u^\ast \cdot \nabla) u - \eta \Delta u + \nabla p & = & \rho f_u             & \mbox{in} & (0,\,a) \times \Omega \setminus \Sigma, \\[0.5em]
		                                                                                                                                    \mbox{div}\,u & = & g                    & \mbox{in} & (0,\,a) \times \Omega \setminus \Sigma, \\[0.5em]
		                                                                                              \partial_t c + (u^\ast \cdot \nabla) c - d \Delta c & = & f_c                  & \mbox{in} & (0,\,a) \times \mbox{ext}(\Sigma),      \\[0.5em]
	                                                                                                                                   [\![u]\!]_\Sigma & = & 0                    & \mbox{on} & (0,\,a) \times \Sigma,                  \\[0.5em]
		                  - P_\Sigma [\![\eta( \nabla u + \nabla u^{\sf{T}})]\!]_\Sigma\,\nu_\Sigma - \sigma^\prime(c^\ast_\Sigma) \nabla_\Sigma c_\Sigma & = & h^\tau_u             & \mbox{on} & (0,\,a) \times \Sigma,                  \\[0.5em]
		- [\![\eta( \nabla u + \nabla u^{\sf{T}})]\!]_\Sigma\,\nu_\Sigma \cdot \nu_\Sigma + [\![p]\!]_\Sigma - \sigma(c^\ast_\Sigma) \Delta_\Sigma \gamma & = & h^\nu_u              & \mbox{on} & (0,\,a) \times \Sigma,                  \\[0.5em]
		                                                              \partial_t \gamma - u \cdot \nu_\Sigma + (u^\ast_\Sigma \cdot \nabla_\Sigma) \gamma & = & h_\gamma             & \mbox{on} & (0,\,a) \times \Sigma,                  \\[0.5em]
		                                                                                             \alpha^\prime([c^\ast]_\Sigma) [c]_\Sigma - c_\Sigma & = & h^\alpha_c           & \mbox{on} & (0,\,a) \times \Sigma,                  \\[0.5em]
		                                             \partial_t c_\Sigma + (u^\ast_\Sigma \cdot \nabla_\Sigma) c_\Sigma - d_\Gamma \Delta_\Sigma c_\Sigma & = & h^\epsilon_c         & \mbox{on} & (0,\,a) \times \Sigma,                  \\[0.5em]
		                                                                                                  [u]_{\partial \Omega} = 0, \quad \partial_\nu c & = & 0                    & \mbox{on} & (0,\,a) \times \partial \Omega,         \\[0.5em]
		                                                         u(0) = u_0\ \mbox{in}\ \Omega, \quad \gamma(0) = \gamma_0\ \mbox{on}\ \Sigma, \quad c(0) & = & c_0                  & \mbox{in} & \mbox{ext}(\Sigma),
	\end{array}
\end{equation}
provided the data satisfy the necessary regularity and compatibility conditions.
Here the necessary regularity conditions are given as
\begin{equation*}
	\begin{array}{rclcl}
		         f_u & \in & \bF_u(a)          & := & L_p((0,\,a),\,L_p(\Omega)^n), \\[0.5em]
		           g & \in & \bG(a)            & := & H^1_p((0,\,a),\,{}_0 \dot{H}^{-1}_p(\Omega)) \cap H^{1/2}_p((0,\,a),\,L_p(\Omega)) \\[0.5em]
		             &     &                   &    & \qquad \qquad \cap \ L_p((0,\,a),\,H^1_p(\Omega \setminus \Sigma)), \\[0.5em]
		         f_c & \in & \bF_c(a)          & := & L_p((0,\,a),\,L_p(\mbox{ext}(\Sigma))), \\[0.5em]
		    h^\tau_u & \in & \bH^\tau_u(a)     & := & W^{1/2 - 1/2p}_p((0,\,a),\,L_p(\Sigma,\,T\Sigma)) \cap L_p((0,\,a),\,W^{1 - 1/p}_p(\Sigma,\,T\Sigma)), \\[0.5em]
		     h^\nu_u & \in & \bH^\nu_u(a)      & := & W^{1/2 - 1/2p}_p((0,\,a),\,L_p(\Sigma)) \cap L_p((0,\,a),\,W^{1 - 1/p}_p(\Sigma)), \\[0.5em]
		    h_\gamma & \in & \bH_\gamma(a)     & := & W^{1 - 1/2p}_p((0,\,a),\,L_p(\Sigma)) \cap L_p((0,\,a),\,W^{2 - 1/p}_p(\Sigma)), \\[0.5em]
		  h^\alpha_c & \in & \bH^\alpha_c(a)   & := & \bX_s(a), \\[0.5em]
		h^\epsilon_c & \in & \bH^\epsilon_c(a) & := & L_p((0,\,a),\,L_p(\Sigma)), \\[0.5em]
		         u_0 & \in & \bT_u             & := & \left\{\,\Phi \in W^{2 - 2/p}_p(\Omega \setminus \Sigma)^n\,:\,[\![\Phi]\!]_\Sigma = 0,\ [\Phi]_{\partial \Omega} = 0\,\right\}, \\[0.5em]
		    \gamma_0 & \in & \bT_\gamma        & := & W^{3 - 2/p}_p(\Sigma), \\[0.5em]
		         c_0 & \in & \bT_c             & := & \left\{\,\phi \in W^{2 - 2/p}_p(\mbox{ext}(\Sigma))\,:\,[\phi]_\Sigma \in W^{2 - 2/p}_p(\Sigma),\ \partial_\nu \phi = 0\,\right\}.
	\end{array}
\end{equation*}
Note that all these spaces arise based on trace theorems for the solution space $\bX(a)$.
The first regularity assertion for the right-hand side $g$ of the divergence constraint is a result of partial integration.
Here ${}_0 \dot{H}^{-1}_p(\Omega) := \dot{H}^1_{p^\prime}(\Omega)^\prime$ with $1/p + 1/p^\prime = 1$.
With the abbreviations
\begin{equation*}
	\begin{array}{rcl}
		\bY(a) & := & \bF_u(a) \times \bG(a) \times \bF_c(a) \times \bH^\tau_u(a) \times \bH^\nu_u(a) \times \bH_\gamma(a) \times \bH^\alpha_c(a) \times \bH^\epsilon_c(a), \\[0.5em]
		   \bT & := & \bT_u \times \bT_\gamma \times \bT_c
	\end{array}
\end{equation*}
our main result concerning \eqnref{model:linear} now reads as follows.
\begin{theorem}
	\thmlabel{maximal-regularity}
	Let $a > 0$, let $\Omega \subseteq \bR^n$ be a bounded domain with boundary of class $C^{3-}$, let $\Sigma \in \cM\cH^2(\Omega)$ be of class $C^{3-}$ and let $p > n + 2$.
	Let $\rho_\pm,\,\eta_\pm,\,d,\,d_\Gamma > 0$ and let $\sigma,\,\alpha \in C^{3-}(\bR_+,\,\bR_+)$ with $\alpha^\prime > 0$.
	Moreover, let
	\begin{equation*}
		u^\ast \in \bX_u(a), \quad c^\ast \in \bX_c(a), \quad c^\ast_\Sigma \in \bX_s(a).
	\end{equation*}
	Then the linear problem \eqnref{model:linear} admits a unique maximal regular solution
	\begin{equation*}
		(u,\,p,\,\gamma,\,c,\,c_\Sigma) \in \bX(a),
	\end{equation*}
	if and only if the data satisfy the regularity conditions
	\begin{equation*}
		(f_u,\,g,\,f_c,\,h^\tau_u,\,h^\nu_u,\,h_\gamma,\,h^\alpha_c,\,h^\epsilon_c) \in \bY(a),
		\qquad (u_0,\,\gamma_0,\,c_0) \in \bT
	\end{equation*}
	and the compatibility conditions
	\begin{equation*}
		\mbox{\upshape div}\,u_0 = g(0), \qquad - P_\Sigma [\![\eta( \nabla u_0 + \nabla u^{\sf{T}}_0)]\!]_\Sigma\,\nu_\Sigma - \sigma^\prime(c^\ast_\Sigma(0)) \nabla_\Sigma s_0 = h^\tau_u(0)
	\end{equation*}
	with
	\begin{equation*}
		s_0 = \alpha^\prime([c^\ast(0)]_\Sigma) [c_0]_\Sigma - h^\alpha_c(0).
	\end{equation*}
	Moreover, the solutions depend continuously on the data.
\end{theorem}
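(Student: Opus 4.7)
The structural observation driving the proof is that the coupled linear system \eqnref{model:linear} has a triangular left-hand side: the surface diffusion equation for $c_\Sigma$ contains only $c_\Sigma$ itself, the bulk surfactant equation for $c$ couples to $c_\Sigma$ solely through the algebraic boundary relation, and the velocity--pressure--height system for $(u, p, \gamma)$ contains the surface concentration only via the Marangoni term $\sigma^\prime(c^\ast_\Sigma) \nabla_\Sigma c_\Sigma$ in the tangential stress balance. I will therefore solve the three subproblems in this order, invoking established $L_p$-maximal regularity results at each stage, and deduce uniqueness, continuous dependence, and necessity of the compatibility conditions from the corresponding properties in each step together with trace theory for $\bX(a)$.

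In the first step, $c_\Sigma \in \bX_s(a)$ is obtained by solving $\partial_t c_\Sigma + (u^\ast_\Sigma \cdot \nabla_\Sigma) c_\Sigma - d_\Gamma \Delta_\Sigma c_\Sigma = h^\epsilon_c$ with initial value $s_0$. This is a parabolic equation on the compact $C^{3-}$ hypersurface $\Sigma$ with a bounded first-order drift (since $\bX_u(a)$ embeds into $BUC([0,a] \times (\Omega \setminus \Sigma))^n$ for $p > n + 2$), and $L_p$-maximal regularity follows from standard results for the Laplace--Beltrami operator after absorbing the lower-order term. In the second step, the bulk equation becomes a parabolic problem for $c$ in $\mbox{ext}(\Sigma)$ with homogeneous Neumann data on $\partial \Omega$ and inhomogeneous Dirichlet data $[c]_\Sigma = \alpha^\prime([c^\ast]_\Sigma)^{-1}(c_\Sigma + h^\alpha_c)$ on $\Sigma$. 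The reciprocal is a bounded multiplier on $\bX_s(a)$, because $\alpha^\prime > 0$ and $[c^\ast]_\Sigma \in \bX_s(a) \hookrightarrow BUC([0,a] \times \Sigma)$; hence the Dirichlet data lies in $\bX_s(a)$, which is precisely the trace class of $\bX_c(a)$. The compatibility $[c_0]_\Sigma = [c]_\Sigma(0)$ is automatic from the defining equation for $s_0$, and parabolic maximal regularity with mixed boundary conditions then yields the unique $c \in \bX_c(a)$.

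In the third step, the Marangoni term $\sigma^\prime(c^\ast_\Sigma) \nabla_\Sigma c_\Sigma$ is moved to the right-hand side of the tangential stress balance. The mixed derivative theorem places $\nabla_\Sigma c_\Sigma$ in $H^{1/2}_p((0,a), L_p(\Sigma)) \cap L_p((0,a), H^1_p(\Sigma))$, which embeds continuously into $\bH^\tau_u(a)$, and multiplication by $\sigma^\prime(c^\ast_\Sigma) \in BUC([0,a] \times \Sigma)$ preserves this regularity; the assumed tangential compatibility condition ensures that the modified trace vanishes at $t = 0$. The residual system for $(u, p, \gamma)$ is then a two-phase Navier--Stokes problem with variable surface tension coefficient $\sigma(c^\ast_\Sigma)$, and its solvability in $\bX_u(a) \times \bX_p(a) \times \bX_\gamma(a)$ would be derived from the maximal regularity result of \cite{Koehne-Pruess-Wilke:Two-Phase-Navier-Stokes}.

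The hard part is precisely this final step: the cited result is formulated for a constant surface tension, whereas here the coefficient $\sigma(c^\ast_\Sigma)$ multiplying $\Delta_\Sigma \gamma$ in the normal stress balance depends on both $t$ and $x$. I would resolve this by a two-level freezing argument. On a short interval $[0, a_1]$, the deviation $\sigma(c^\ast_\Sigma) - \sigma(c^\ast_\Sigma(0, \cdot))$ is uniformly small by continuity of the embedding $\bX_s(a) \hookrightarrow BUC([0,a] \times \Sigma)$, so that it can be absorbed via a Neumann series around the time-independent-coefficient problem. The latter reduces, by the usual localization procedure on $\Sigma$, to half-space model problems with spatially constant coefficients, which are covered by the framework of \cite{Koehne-Pruess-Wilke:Two-Phase-Navier-Stokes}. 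Concatenating solutions on finitely many such short intervals then exhausts the full time interval $[0, a]$.
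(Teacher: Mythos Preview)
Your proposal is correct and follows essentially the same triangular decomposition as the paper: first solve the surface diffusion equation for $c_\Sigma$, then the bulk parabolic problem for $c$, and finally the two-phase Stokes system for $(u,p,\gamma)$ with the Marangoni term treated as known data. The only difference is in the handling of the variable surface tension $\sigma(c^\ast_\Sigma)$ in the final step: the paper simply invokes \cite[Theorem~3.1 \& Corollary~3.4]{Koehne-Pruess-Wilke:Two-Phase-Navier-Stokes}, which already covers the case of time- and space-dependent surface tension, whereas you sketch the freezing-and-concatenation argument that underlies such an extension. Your version is thus more self-contained but otherwise the same route.
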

\begin{proof}
	We start by determining $c_\Sigma \in \bX_s(a)$ as the unique maximal regular solution to the parabolic problem
	\begin{equation*}
		\begin{array}{rclll}
			\partial_t c_\Sigma + (u^\ast_\Sigma \cdot \nabla_\Sigma) c_\Sigma - d_\Gamma \Delta_\Sigma c_\Sigma & = & h^\epsilon_c & \mbox{on} & (0,\,a) \times \Sigma, \\[0.5em]
			                                                                                         c_\Sigma(0) & = & s_0          & \mbox{on} & \Sigma,
		\end{array}
	\end{equation*}
	which then enables us to obtain $c \in \bX_c(a)$ as the unique maximal regular solution to the parabolic problem
	\begin{equation*}
		\begin{array}{rclll}
			\partial_t c + (u^\ast \cdot \nabla) c - d \Delta c & = & f_c                   & \mbox{in} & (0,\,a) \times \mbox{ext}(\Sigma), \\[0.5em]
		                  \alpha^\prime([c^\ast]_\Sigma) [c]_\Sigma & = & c_\Sigma + h^\alpha_c & \mbox{on} & (0,\,a) \times \Sigma,             \\[0.5em]
			                                           \partial_\nu c & = & 0                     & \mbox{on} & (0,\,a) \times \partial \Omega,    \\[0.5em]
			                                                     c(0) & = & c_0                   & \mbox{in} & \mbox{ext}(\Sigma).
		\end{array}
	\end{equation*}
	Note that at this point we require $\alpha^\prime > 0$ in order to force the boundary condition to satisfy the Lopatinskii-Shapiro condition.
	Now, the remaining system for the unknowns
	\begin{equation*}
		(u,\,p,\,\gamma) \in \bX_u(a) \times \bX_p(a) \times \bX_\gamma(a)
	\end{equation*}
	forms a two-phase Navier-Stokes problem with time and space dependent surface tension $\sigma(c^\ast_\Sigma)$
	and we may invoke \cite[Theorem~3.1 \& Corollary~3.4]{Koehne-Pruess-Wilke:Two-Phase-Navier-Stokes} to complete the proof.
\end{proof}

\section{Local Well-Posedness}
	\seclabel{well-posedness}
\subsection{The Non-Linear Problem}
With the maximal $L_p$-regularity result, \Thmref{maximal-regularity}, at hand
we require only one more ingredient to prove the well-posedness assertion of \Thmref{well-posedness}.
Of course, we also need suitable estimates for the non-linear right-hand sides in \eqnref{model:linearized}.
To shorten the notation we denote by
\begin{equation*}
	L(\,\cdot\,|\,u^\ast,\,c^\ast,\,c^\ast_\Sigma): \bX(a) \longrightarrow \bY(a), \quad \mbox{resp.} \quad N(\,\cdot\,|\,u^\ast,\,c^\ast,\,c^\ast_\Sigma): \bX(a) \longrightarrow \bY(a)
\end{equation*}
the linear operator induced by the left-hand side resp.\ the non-linear operator induced by the right-hand side of \eqnref{model:linearized}
without the inital condition.
Thus, the problem \eqnref{model:linearized} is equivalent to
\begin{equation*}
	\begin{array}{rcl}
		L(u,\,p,\,\gamma,\,c,\,c_\Sigma\,|\,u^\ast,\,c^\ast,\,c^\ast_\Sigma) & = & N(u,\,p,\,\gamma,\,c,\,c_\Sigma\,|\,u^\ast,\,c^\ast,\,c^\ast_\Sigma), \\[0.5em]
		                                           (u(0),\,\gamma(0),\,c(0)) & = & (u_0,\,\gamma_0,\,c_0)
	\end{array}
\end{equation*}
and the operator $L(\,\cdot\,|\,u^\ast,\,c^\ast,\,c^\ast_\Sigma)$ is bounded, since the derived regularity conditions on the data are necessary.
Moreover, the operator
\begin{equation*}
	{}_0 L(\,\cdot\,|\,u^\ast,\,c^\ast,\,c^\ast_\Sigma): {}_0 \bX(a) \longrightarrow {}_0 \bY(a),
\end{equation*}
which denotes the restriction of the operator $L(\,\cdot\,|\,u^\ast,\,c^\ast,\,c^\ast_\Sigma)$ to the closed subspace ${}_0 \bX(a)$ of $\bX(a)$ with vanishing initial values,
is an isomorphism thanks to \Thmref{maximal-regularity}.
Here we denote by ${}_0 \bY(a)$ the closed subspace of $\bY(a)$ with vanishing initial values
and note that the corresponding compatibility conditions on the data are necessarily satisfied in the setting of vanishing initial values.
Now, the non-linear operator $N(\,\cdot\,|\,u^\ast,\,c^\ast,\,c^\ast_\Sigma)$ has the following mapping properties
as has been shown in \cite[Proposition~4.1]{Pruess-Simonett:Two-Phase-Navier-Stokes-Analytic} for the two-phase Navier-Stokes
equations with surface tension in the case where $\Sigma = \bR^{n - 1}$ and in \cite[Proposition~4.2]{Koehne-Pruess-Wilke:Two-Phase-Navier-Stokes}
in the case of a general geometry as considered here.
\begin{proposition}
	\proplabel{non-linearity}
	Let $a > 0$, let $\Omega \subseteq \bR^n$ be a bounded domain with boundary of class $C^{3-}$, let $\Sigma \in \cM\cH^2(\Omega)$ be of class $C^{3-}$ and let $p > n + 2$.
	Let $\rho_\pm,\,\eta_\pm,\,d,\,d_\Gamma > 0$ and let $\sigma,\,\alpha \in C^{3-}(\bR_+,\,\bR_+)$ with $\alpha^\prime > 0$.
	Moreover, let
	\begin{equation*}
		u^\ast \in \bX_u(a), \quad c^\ast \in \bX_c(a), \quad c^\ast_\Sigma \in \bX_s(a).
	\end{equation*}
	Then
	\begin{equation*}
		N(\,\cdot\,|\,u^\ast,\,c^\ast,\,c^\ast_\Sigma) \in C^1(\bX(a),\,\bY(a))
	\end{equation*}
	and the Fr{\'e}chet derivative $DN(u,\,p,\,\gamma,\,c,\,c_\Sigma\,|\,u^\ast,\,c^\ast,\,c^\ast_\Sigma)$ satisfies the estimate
	\begin{equation*}
		\begin{array}{l}
			M^{-1} {\|DN(u,\,p,\,\gamma,\,c,\,c_\Sigma\,|\,u^\ast,\,c^\ast,\,c^\ast_\Sigma)\|}_{\cB({}_0 \bX(\bar{a}),\,{}_0 \bY(\bar{a}))} \\[0.5em]
				\quad  \leq \ {\|u - u^\ast\|}_{BC((0,\,\bar{a}),\,BC^1)} + {\|c - c^\ast\|}_{BC((0,\,\bar{a}),\,BC^1)} \\[0.5em]
				\qquad    + \ {\|c_\Sigma - c^\ast_\Sigma\|}_{BC((0,\,\bar{a}),\,BC^1)} + {\|(u,\,p,\,\gamma,\,c,\,c_\Sigma)\|}_{\bX(\bar{a})} \\[0.5em]
				\qquad    + \ \big( {\|\gamma\|}_{\bX_\gamma(\bar{a})} + {\|\nabla_\Sigma \gamma\|}_{BC((0,\,\bar{a}),\,BC^1)} \big) {\|(u,\,p,\,\gamma,\,c,\,c_\Sigma)\|}_{\bX(\bar{a})} \\[0.5em]
				\qquad    + \ P\big( {\|\gamma\|}_{\bX_\gamma(\bar{a})},\,{\|\nabla_\Sigma \gamma\|}_{BC((0,\,\bar{a}),\,BC^1)} \big) {\|\gamma\|}_{\bX_\gamma(\bar{a})} \\[0.5em]
				\qquad    + \ Q\big( {\|\nabla_\Sigma \gamma\|}_{BC((0,\,\bar{a}),\,BC^1)} \big) {\|\nabla_\Sigma \gamma\|}_{BC((0,\,\bar{a}),\,BC^1)}
		\end{array}
	\end{equation*}
	for all $\bar{a} \in (0,\,a]$ and all $(u,\,p,\,\gamma,\,c,\,c_\Sigma) \in \bX(\bar{a})$ with some constant $M = M(a) > 0$.
	Here $P$ and $Q$ denote fixed polynomials.
	Moreover, if $\alpha,\,\sigma \in C^\omega(\bR_+,\,\bR_+)$, then
	\begin{equation*}
		N(\,\cdot\,|\,u^\ast,\,c^\ast,\,c^\ast_\Sigma) \in C^\omega(\bX(a),\,\bY(a)).
	\end{equation*}
\end{proposition}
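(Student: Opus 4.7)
The plan is to split the nonlinearity $N(\,\cdot\,|\,u^\ast,c^\ast,c^\ast_\Sigma)$ into two groups: (i) terms that already appear in the two-phase Navier-Stokes problem with constant surface tension --- namely $F_u$, $G$, $H_\gamma$, and the $c_\Sigma$-independent pieces of $H^\tau_u$, $H^\nu_u$ --- and (ii) the genuinely new surfactant-driven terms $F_c$, $H^\alpha_c$, $H^\epsilon_c$ and the Marangoni contributions to $H^\tau_u, H^\nu_u$ involving $\sigma(c_\Sigma)$, $\sigma^\prime(c_\Sigma)\nabla_\Sigma c_\Sigma$ and $\sigma(c_\Sigma)\kappa_\Gamma(\gamma)$. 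Group (i) is covered verbatim by \cite[Proposition~4.2]{Koehne-Pruess-Wilke:Two-Phase-Navier-Stokes}, whose derivation of the same structural estimate never uses constancy of $\sigma$, so the real work concerns group (ii).

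The tools I would employ are the embeddings $\bX_u(\bar a), \bX_c(\bar a), \bX_\gamma(\bar a), \bX_s(\bar a)\hookrightarrow BUC([0,\bar a], BUC^1)$ (on bulk and on trace) granted by $p > n+2$; the algebra property of $W^s_p$ for $s>n/p$; the anisotropic space-time multiplier calculus of \cite{Pruess-Simonett:Two-Phase-Navier-Stokes-Analytic, Koehne-Pruess-Wilke:Two-Phase-Navier-Stokes} for the trace spaces $\bH^\tau_u, \bH^\nu_u, \bH_\gamma$; and the $C^1$ (resp.\ $C^\omega$) character of the Nemytskii operators generated by $\sigma, \sigma^\prime, \alpha, \alpha^\prime \in C^{3-}$ (resp.\ $C^\omega$). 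For $\|\nabla_\Sigma \gamma\|_\infty$ sufficiently small, the coefficient operators $\sfG(\gamma), \sfD(\gamma), \sfA(\gamma), \sfL(\gamma), \sfN(\gamma), \mu(\gamma), \kappa_\Gamma(\gamma)$ are $C^\omega$ in $(\gamma, \nabla_\Sigma \gamma)$ as rational/polynomial expressions whose coefficients depend only on the fixed data $\nu_\Sigma, L_\Sigma$. With these tools the new terms are estimated one by one. $F_c$ mirrors $F_u$: the $\|u-u^\ast\|$-contribution comes from $\sfM(u,\gamma\,|\,u^\ast)\cdot\nabla c$, and the $\|\gamma\|_{\bX_\gamma}$-weighted contributions from $\sfA(\gamma)\cdot\nabla c$ and $\sfL(\gamma):\nabla^2 c$. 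The Fréchet derivative of $H^\alpha_c = \alpha^\prime([c^\ast]_\Sigma)[c]_\Sigma - \alpha([c]_\Sigma)$ equals $\bigl(\alpha^\prime([c^\ast]_\Sigma) - \alpha^\prime([c]_\Sigma)\bigr)[\,\cdot\,]_\Sigma$ and so is bounded by $\|c - c^\ast\|_{BC(BC^1)}$. Difference-type Marangoni pieces such as $\mu(\gamma)^{-1}\bigl(\sigma^\prime(c_\Sigma) - \sigma^\prime(c^\ast_\Sigma)\bigr)\nabla_\Sigma c_\Sigma$ yield the $\|c_\Sigma - c^\ast_\Sigma\|_{BC(BC^1)}$ term by the same mechanism, whereas the curvature-weighted pieces $\sigma(c_\Sigma)\kappa_\Gamma(\gamma)\sfN(\gamma)\nabla_\Sigma\gamma$ and their counterparts in $H^\nu_u$ are absorbed into the polynomial factors $P$ and $Q$. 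Finally, $H^\epsilon_c$ splits into a surface transport-diffusion remainder (treated like $F_c$ pulled back to $\Sigma$), the flux $d\partial^e_\nu \bar c$, and the interfacial compressibility term $c_\Sigma \cD_\Gamma(\gamma) u$; the trace theorem places each of these into $L_p((0,\bar a), L_p(\Sigma)) = \bH^\epsilon_c(\bar a)$.

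The main technical obstacle will be bookkeeping rather than new analysis: every interfacial product making up $H^\tau_u, H^\nu_u$ has to be shown to lie in the correct anisotropic trace space $W^{1/2 - 1/(2p)}_p((0,\bar a), L_p(\Sigma)) \cap L_p((0,\bar a), W^{1-1/p}_p(\Sigma))$, which requires invoking the full multiplier calculus of \cite{Pruess-Simonett:Two-Phase-Navier-Stokes-Analytic, Koehne-Pruess-Wilke:Two-Phase-Navier-Stokes} for products that mix bulk functions with traces on $\Sigma$, now applied to the surfactant-dependent factors. Uniformity in $\bar a \in (0,a]$ follows from restriction of the reference functions together with uniform control of the embedding and multiplier constants on $(0,a)$. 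The analyticity statement under $\sigma, \alpha \in C^\omega(\bR_+,\bR_+)$ is inherited by the three types of building blocks: polynomial/rational operators in $(\gamma, \nabla_\Sigma \gamma)$ (with denominators bounded away from zero for small $\gamma$), Nemytskii operators generated by analytic scalar functions acting on $BUC^1$-valued trajectories, and compositions of such; each class is $C^\omega$ between the relevant Banach spaces, whence $N \in C^\omega(\bX(a), \bY(a))$.
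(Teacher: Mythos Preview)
Your proposal is correct and follows essentially the same approach as the paper. The paper itself does not give a self-contained proof but simply states that the arguments of \cite[Proposition~4.1]{Pruess-Simonett:Two-Phase-Navier-Stokes-Analytic} and \cite[Proposition~4.2]{Koehne-Pruess-Wilke:Two-Phase-Navier-Stokes} carry over, citing as the two key ingredients the polynomial structure of $N$ and the embeddings $\bX_u(\bar a),\,\bX_\gamma(\bar a),\,\bX_c(\bar a)\hookrightarrow BUC((0,\bar a),\,BUC^1)$ available for $p>n+2$; your outline is a faithful and more detailed unpacking of exactly these points, including the handling of the new surfactant-driven terms.
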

The proofs given in the above mentioned sources carry over to the case considered here.
The main arguments are the polynomial structure of $N(\,\cdot\,|\,u^\ast,\,c^\ast,\,c^\ast_\Sigma)$,
which is equal to that considered in \cite{Koehne-Pruess-Wilke:Two-Phase-Navier-Stokes}, and the embeddings
\begin{equation*}
	\bX_u(\bar{a}),\,\bX_\gamma(\bar{a}),\,\bX_c(\bar{a}) \hookrightarrow BUC((0,\,\bar{a}),\,BUC^1),
\end{equation*}
which are available thanks to $p > n + 2$ and which imply the involved function spaces to be Banach algebras.
With this preparations at hand the proof of \Thmref{well-posedness} may be carried out as follows.

\subsection{Proof of \Thmref{well-posedness}, Step 1}
Suppose the assumptions of \Thmref{well-posedness} are satisfied and
\begin{equation*}
	u_0 \in W^{2 - 2/p}_p(\Omega \setminus \Gamma_0), \quad \Gamma_0 \in W^{3 - 2/p}_p, \quad c_0 \in W^{2 - 2/p}_p(\Omega_+(0), \bR_+)
\end{equation*}
that satisfy the stated regularity and compatibility conditions are given.
According to the considerations in \Secref{linearization} we may approximate $\Gamma_0$ for every given $\delta > 0$
with a real analytic hypersurface $\Sigma \in \cM\cH^2(\Omega)$ such that
\begin{equation*}
	\mbox{dist}_{\cM\cH^2(\Omega)}(\Sigma,\,\Gamma_0) < \delta
\end{equation*}
and $\Gamma_0$ is parametrized over $\Sigma$ by a function $\gamma_0 \in W^{3 - 2/p}_p(\Sigma)$.
Therefore, it is sufficient to consider the transformed problem \eqnref{model:linearized},
where we drop the bars again for convenience.

Now, we denote by $\bar{u} \in \bX_u(a)$ an arbitrary extension of $u_0$,
which may e.\,g.\ be obtained as a solution to a suitable parabolic problem.
Moreover, we denote by $\bar{h}^\alpha_c \in \bH^\alpha_c(a)$ an arbitrary extension of $\alpha^\prime(0) [c_0]_\Sigma$,
which may also be obtained as a solution to a suitable parabolic problem on $\Sigma$.
Finally, we denote by $\bar{g} \in \bG(a)$ resp.\ $\bar{h}^\tau_u \in \bH^\tau_u(a)$ extentions of
\begin{equation*}
	\mbox{div}\,u_0 \qquad \mbox{resp.} \qquad - P_\Sigma [\![\eta(\nabla u_0 + \nabla^{\sf{T}}_0)]\!]_\Sigma\,\nu_\Sigma,
\end{equation*}
which exist thanks to \cite[Proposition~4.1]{Koehne-Pruess-Wilke:Two-Phase-Navier-Stokes},
and solve
\begin{equation*}
	\begin{array}{c}
		(u^\ast,\,p^\ast,\,\gamma^\ast,\,c^\ast,\,c^\ast_\Sigma) \in \bX(a), \quad (u^\ast(0),\,\gamma^\ast(0),\,c^\ast(0)) = (u_0,\,\gamma_0,\,c_0) \\[0.5em]
		\quad L(u^\ast,\,p^\ast,\,\gamma^\ast,\,c^\ast,\,c^\ast_\Sigma\,|\,\bar{u},\,0,\,0) = (f,\,\bar{g},\,0,\,\bar{h}^\tau_u,\,0,\,0,\,\bar{h}^\alpha_c,\,0)
	\end{array}
\end{equation*}
to obtain a reference solution $(u^\ast,\,c^\ast,\,c^\ast_\Sigma) \in \bX_u(a) \times \bX_c(a) \times \bX_s(a)$.

\subsection{Proof of \Thmref{well-posedness}, Step 2}
To prepare a fixed point argument we split the desired solution $z = (u,\,p,\,\gamma,\,c,\,c_\Sigma) \in \bX(\bar{a})$ as
\begin{equation*}
	z = z^\ast + \bar{z}, \qquad \bar{z} \in {}_0 \bX(\bar{a}),
\end{equation*}
where $z^\ast = (u^\ast,\,p^\ast,\,\gamma^\ast,\,c^\ast,\,c^\ast_\Sigma) \in \bX(a)$ denotes the reference solution obtained in the first step
and $\bar{a} \in (0,\,a]$ will be chosen below.
Problem \eqnref{model:linearized} may then be rewritten as
\begin{equation*}
	{}_0 L(\bar{z}\,|\,u^\ast,\,c^\ast,\,c^\ast_\Sigma) = N(\bar{z} + z^\ast\,|\,u^\ast,\,c^\ast,\,c^\ast_\Sigma) - L(z^\ast\,|\,u^\ast,\,c^\ast,\,c^\ast_\Sigma) =: K(\bar{z})
\end{equation*}
and, hence, the solution is given as the fixed point $\bar{z} = {}_0 L(\,\cdot\,|\,u^\ast,\,c^\ast,\,c^\ast_\Sigma)^{-1} K(\bar{z})$.
Now,
\begin{equation*}
	{\|{}_0 L(\,\cdot\,|\,u^\ast,\,c^\ast,\,c^\ast_\Sigma)^{-1}\|}_{\cB({}_0 \bY(\bar{a}),\,{}_0 \bX(\bar{a}))} \leq C, \qquad \bar{a} \in (0,\,a]
\end{equation*}
and due to $K(0) = N(z^\ast\,|\,u^\ast,\,c^\ast,\,c^\ast_\Sigma) - L(z^\ast\,|\,u^\ast,\,c^\ast,\,c^\ast_\Sigma)$ and \Propref{non-linearity}
we may choose $\bar{a} \in (0,\,a]$ and $r > 0$ sufficiently small to ensure
\begin{equation*}
	{\|K(0)\|}_{{}_0 \bY(\bar{a})} < \frac{r}{2 C}, \quad {\|DK(\bar{z})\|}_{\cB({}_0 \bX(\bar{a}),\,{}_0 \bY(\bar{a}))} < \frac{1}{2 C}, \quad \bar{z} \in {}_0 \bX(\bar{a}),\ {\|\bar{z}\|}_{{}_0 \bX(\bar{a})} \leq r.
\end{equation*}
Thus, the contraction mapping principle applies and yields the unique solution $\bar{z} \in {}_0 \bX(\bar{a})$.

\section{Analyticity}
	\seclabel{analyticity}
\subsection{Proof of \Thmref{well-posedness}, Step 3}
In order to complete the proof of \Thmref{well-posedness} it remains to show the analyticity of the solutions.
However, this may be obtained by the well-known parameter-trick since the non-linear right hand side $N(\,\cdot\,|\,u^\ast,\,c^\ast,\,c^\ast_\Sigma)$
is real analytic, provided $\alpha,\,\sigma \in C^\omega(\bR_+,\,\bR_+)$, cf.~\Propref{non-linearity}.
We refrain from giving all the details here, since the proof is similar as that of \cite[Theorem~4.3]{Koehne-Pruess-Wilke:Two-Phase-Navier-Stokes}
and \cite[Theorem~6.3]{Pruess-Simonett:Two-Phase-Navier-Stokes-Analytic};
see also~\cite{Escher-Pruess-Simonett:Parabolic-Analyticity, Escher-Pruess-Simonett:Stefan-Analytic}.

\section{Semiflow}
	\seclabel{semiflow}
\subsection{Proof of \Thmref{semiflow}}
That the solutions to \eqnref{model} obtained by \Thmref{well-posedness} generate a local semiflow in the phase manifold $\cS_p(\Omega)$
can be seen as follows.
Given
\begin{equation*}
	u_0 \in W^{2 - 2/p}_p(\Omega \setminus \Gamma_0), \quad \Gamma_0 \in W^{3 - 2/p}_p, \quad c_0 \in W^{2 - 2/p}_p(\Omega_+(0), \bR_+)
\end{equation*}
that satisfy the regularity and compatibility conditions as stated in \Thmref{well-posedness},
we obtain a local solution on some time interval $[0,\,a)$ with $a > 0$.
However, these solutions belong to the maximal regularity class defined in \Secref{linearization} and, thus,
admit a trace at $t = a$, which allows to reinvoke \Thmref{well-posedness} to continue the solution on a larger time interval.
Note that this also allows to choose a new reference manifold for the construction of the solution.
Repeating this procedure we obtain a maximal time interval $[0,\,a^\ast)$, which may be bounded by the fact that
\begin{equation*}
	\lim_{t \rightarrow a^\ast} (u(t),\,\Gamma(t),\,c(t))
\end{equation*}
does not exist in $\cS_p(\Omega)$ or by the fact that the interface undergoes a change in its topology or touches the outer boundary $\partial \Omega$,
in which case the model is no longer suitable to describe the situation.

\section*{Acknowledgements}
The first two authors gratefully acknowledge financial support by the Deutsche Forschungsgemeinschaft
within the priority programme ``Transport Processes at Fluidic Interfaces'', SPP 1506 (D.\,B.)
and the International Research Training Group ``Mathematical Fluid Dynamics'', IRTG 1529 (M.\,K.).

\bibliographystyle{plain}
\bibliography{references}
\end{document}